\def \AA {{\mathsf{A}}^\star}
\def \RR {{\mathsf{R}}^\star}
\def \NL {{\mathsf{NL}}^\star}
\def \prankpi {{\mathcal Q^{\,\pi}_{\,dH, \,\chi}(\mathbb C^r)}}
\def \pranktwopi {{\mathcal Q^{\,\pi}_{\,H, \,\chi}(\mathbb C^2)}}
\def \pranktwo {{\mathcal Q_{H, \,\chi}(\mathbb C^2)}}
\def \pranktwor {{\mathcal Q_{H, \,\chi}(\mathbb C^r)}}
\newcommand{\comment}[1]{}
\newtheorem{theorem}{Theorem}
\newtheorem {lemma}{Lemma}
\newtheorem{conjecture}{Conjecture}
\newtheorem {corollary}{Corollary}
\newtheorem {proposition}{Proposition}
\theoremstyle{definition}
\theoremstyle {definition}
\newtheorem{remark}{Remark}
\begin{document}
\baselineskip=16pt

\title[Segre classes and Hilbert schemes of points]{Segre classes and   
Hilbert schemes of points}

\author{A. Marian}
\address{Department of Mathematics, Northeastern University}
\email {a.marian@neu.edu}
\author{D. Oprea}
\address{Department of Mathematics, University of California, San Diego}
\email {doprea@math.ucsd.edu}
\author{R. Pandharipande}
\address{Department of Mathematics, ETH Z\"urich}
\email {rahul@math.ethz.ch}

\begin{abstract} We prove a closed
formula for the integrals of the top Segre classes of 
tautological bundles over the Hilbert schemes of points of a 
$K3$ surface $X$.
 We derive relations among 
the Segre classes 
via equivariant localization of the virtual fundamental classes of 
Quot schemes on $X$. 
The resulting recursions are then
solved explicitly. The formula proves the
$K$-trivial case of a conjecture of M. Lehn from 1999. 

The relations determining the Segre classes fit into a much wider 
theory. By localizing the virtual classes of certain relative Quot schemes on surfaces, we obtain new systems of relations among tautological 
classes on
moduli spaces of surfaces and their relative Hilbert schemes of points.
For the moduli of $K3$ sufaces, we produce relations
intertwining the $\kappa$ classes and the Noether-Lefschetz loci.
Conjectures are proposed.
\end{abstract} 
\maketitle

\setcounter{section}{-1}
\section{Introduction} 
\label{iiii}
\subsection {Segre classes} 
Let $(S, H)$ be a pair consisting of a nonsingular
 projective surface $S$ and a line bundle
$$H \rightarrow S\, .$$ 
The {\it degree} of the pair $(S,H)$ is defined via the intersection
product on $S$,
$$H\cdot H= \int_S H^2  \in \mathbb{Z}\, .$$

 The Hilbert scheme of points $S^{[n]}$ carries a tautological rank $n$ vector bundle $H^{[n]}$ whose fiber over $\zeta \in S^{[n]}$ is given by 
$$\zeta\mapsto H^0(H\otimes {\mathcal O}_{\zeta})\, .$$ 
The top Segre class $$N_{S,H,n}=\int_{S^{[n]}} s_{2n} (H^{[n]})$$ 
appeared first in 
 the algebraic study 
of Donaldson invariants  
via the moduli space of rank $2$ bundles on $S$ \cite {Ty}. 
Such Segre classes play a basic role in the
Donaldson-Thomas counting of sheaves (often
entering via the obstruction theory).  
A classical interpretation of $N_{S,H,n}$ is also available.
If $|H|$ is a linear system of dimension $3n-2$ which induces
a  map $$S\rightarrow \mathbb P^{3n-2}\, ,$$ 
$N_{S,H,n}$ counts the $n$-chords of dimension $n-2$ to the image of $S$.

The main result of the paper is the calculation of 
the top Segre classes for all pairs $(X,H)$ in the $K3$ case.

\begin{theorem}\label{main}
If $(X, H)$ is a nonsingular $K3$ surface of degree $2\ell$, then $$
\int_{X^{[n]}} s_{2n} (H^{[n]})=2^{n} \binom{\ell-2n+2}{n}\, .$$ 
\end{theorem}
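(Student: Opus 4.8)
The plan is to realize the Segre integrals $N_{X,H,n}$ as contributions of torus-fixed loci in the virtual localization of a Quot scheme, following the strategy announced in the abstract. Concretely, I would work with the Quot scheme $\pranktwo$ parametrizing short exact sequences
$$0 \to E \to \mathbb{C}^2 \otimes \mathcal{O}_X \to F \to 0,$$
where $F$ is a rank-$1$ coherent quotient with $c_1(F)=H$ and fixed Euler characteristic $\chi$. Since $X$ is $K3$, the canonical bundle is trivial, so Serre duality identifies $\mathrm{Ext}^2(E,F)$ with $\mathrm{Hom}(F,E)^\vee$, which vanishes for the relevant $\chi$; the deformation-obstruction pair $\bigl(\mathrm{Hom}(E,F),\,\mathrm{Ext}^1(E,F)\bigr)$ then defines a two-term perfect obstruction theory and a virtual class $[\pranktwo]^{\mathrm{vir}}$ whose dimension I would compute by Riemann-Roch.

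The second step is to exploit a torus $\mathbb{C}^\star$ acting on $\mathbb{C}^2$ with distinct weights, inducing an action on $\pranktwo$. A fixed quotient splits as $F = F_1 \oplus F_2$ according to the weights, with $\mathbb{C}\otimes \mathcal{O}_X \to F_i$ each a quotient; taking $F_1 = I_{Z_1}\otimes H$ of rank $1$ and $F_2 = \mathcal{O}_{Z_2}$ of dimension zero, the fixed loci become products of Hilbert schemes $X^{[n_1]}\times X^{[n_2]}$. Here the tautological bundle enters decisively: the evaluation sequence $0 \to H\otimes I_{Z_1}\to H \to H\otimes \mathcal{O}_{Z_1}\to 0$ identifies $H^{[n_1]}$ with the value data attached to $Z_1$, and I would check that $H^{[n_1]}$ together with its Chern and Segre classes appears both in the restriction of the natural tautological classes on the Quot scheme and in the weight decomposition of the virtual normal bundle.

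Applying virtual localization then expresses a single global invariant — the integral over $[\pranktwo]^{\mathrm{vir}}$ of a chosen equivariant tautological class, which I would compute independently (by dimension vanishing or as a Riemann-Roch number) — as a sum over the fixed loci $X^{[n_1]}\times X^{[n_2]}$. Fixing $\chi$ forces $n_2 - n_1$ constant, since on $K3$ one has $\chi = 2 + \ell - n_1 + n_2$, so each value of $\chi$ yields one relation tying together the Segre integrals $N_{X,H,n_1}$ over a range of $n_1$, the tautological integrals over the auxiliary factor $X^{[n_2]}$, and the expansion of the inverse equivariant Euler class of the virtual normal bundle. Repeating over all $\chi$ produces a system of recursions relating the $N_{X,H,n}$ across $n$, with the degree entering through $\chi(H)=2+\ell$; the relative schemes $\prankpi$ over families of polarized surfaces furnish the analogous relations in the universal setting.

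The final step is to solve these recursions in closed form. I would encode the numbers $N_{X,H,n}$ in a generating series in $n$ and convert the localization relations into a single functional equation; its unique solution, pinned down by small-$n$ initial values, should be the series whose coefficients are $2^n\binom{\ell-2n+2}{n}$, matching Lehn's prediction. The main obstacle I anticipate lies in the localization step: computing the equivariant restriction of the perfect obstruction theory to the fixed loci and, in particular, the precise Euler class of the virtual normal bundle, and then organizing the resulting Segre-class combinatorics into a recursion clean enough to be summed. Controlling these normal-bundle contributions, rather than the formal manipulation of the final generating function, is where the real work will be.
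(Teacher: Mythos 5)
Your outline reproduces the architecture announced in the paper's abstract, but two of the concrete steps as you describe them would fail, and the two ideas that actually make the computation close are missing. First, the setup: the Quot scheme $\pranktwo$ must parametrize \emph{rank-$0$} (torsion) quotients $F$ with $c_1(F)=H$, not rank-$1$ quotients. With $F$ of rank $1$, a torus-fixed splitting $F=F_1\oplus F_2$ forces the rank-$1$ summand to be a quotient of a single copy of $\mathcal{O}_X$ by a torsion subsheaf of $\mathcal{O}_X$, hence equal to $\mathcal{O}_X$ itself; the sheaf $I_{Z_1}\otimes H$ you propose is not a quotient of $\mathcal{O}_X$ at all. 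In the correct setup the kernel $E$ has rank $2$ and splits at a fixed point as $\mathcal{I}_Z\oplus\,\mathcal{I}_W\otimes\mathcal{O}_X(-C)$ with $C\in|H|$, so the fixed loci are $X^{[z]}\times X^{[w]}\times|H|$. Second, and more seriously, you never invoke the \emph{reduced} virtual class. On a $K3$ surface the composition $\mathrm{Ext}^1(E,F)\to\mathrm{Ext}^2(F,F)\to H^2(\mathcal{O}_X)$ is surjective and a trivial factor must be reduced away; this is the engine of the whole computation, because after reducing one trivial factor, every fixed locus with both $F_1\neq 0$ and $F_2\neq 0$ still retains an additional trivial summand in its obstruction bundle and contributes zero. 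Only the two loci with $(z,w)=(0,n)$ or $(n,0)$, each isomorphic to $X^{[n]}\times|H|$, survive. Without this vanishing, the localization output is a sum over all $(z,w)$ involving integrals you cannot control, and the clean recursion you hope for never materializes.

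The relations one actually obtains are also of a different shape than you anticipate, and they are solved by a different mechanism. The vanishing invariants are the integrals of $\zeta^{2k-1}\cdot 0^{2n+2-2k}$ against the reduced class, where $0$ is lifted equivariantly with nontrivial weight; localization converts each into a linear relation $\sum_{i=0}^{n}\binom{\ell+2-i-2n}{\ell+2-i-2k}\alpha_i=0$ among the mixed integrals $\alpha_i=\int_{X^{[n]}}c_{n-i}\,s_{n+i}$ of a \emph{single} tautological bundle on a \emph{single} $X^{[n]}$ — the Chern classes enter unavoidably through the Euler class of the obstruction bundle $\mathcal{L}\otimes((H^{-1})^{[n]})^\vee$, so the top Segre number is only one of $n+1$ unknowns. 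Moreover these relations are nontrivial only for $2n-2\le\ell\le 3n-3$, so there is no generating function in $n$ and no functional equation to solve. The paper instead uses that $\alpha_n$ is a polynomial of degree $n$ in $\ell$ (Tikhomirov, Ellingsrud--G\"ottsche--Lehn), extracts from the relations the $n$ roots $\ell=2n-2,\dots,3n-3$ via the invertibility of an explicit binomial matrix, and fixes the leading coefficient $2^n/n!$ from the exponential form of the generating series. Your final step should be replaced by this polynomiality-in-$\ell$ argument; as written, the "unique solution pinned down by small-$n$ initial values" is not something the localization relations can deliver.
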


\subsection{Lehn's conjecture}
Let $S$ be a nonsingular projective surface.
The Segre class $N_{S,H,n}$ 
can be expressed as a polynomial of degree $n$ in  
the four variables $$H^2\, ,\ H\cdot K_S\,,\ K_S^2\,,\ c_2(S)\, ,$$ 
see \cite {T} for a proof. Furthermore, the 
form  \begin{equation}\label{egleq}\sum_{n=0}^{\infty} N_{S,H,n}\, z^n=
\exp\Big(H^2 \cdot A_1(z) +  (H\cdot K_S) \cdot A_2(z)+ K_S^2 \cdot A_3(z)+ c_2(S) \cdot A_4(z)\Big)\end{equation} in terms of
 four universal power series $A_1, A_2, A_3, A_4$ was proven
in \cite{EGL}. 
 The formulas for the four power series were explicitly conjectured by M. Lehn
in 1999.  

\begin{conjecture}[Lehn \cite {L}] We have \begin{equation}\label{series}\sum_{n=0}^{\infty} N_{S,H,n}\, z^n=\frac{(1-w)^{a}(1-2w)^{b}}{(1-6w+6w^2)^{c}}\, \end{equation} for the change of variable $$z=\frac{w(1-w)(1-2w)^4}{(1-6w+6w^2)^3}\, $$ and constants $$a=H\cdot K_S-2K_S^2\, ,\,\,\ b=(H-K_S)^2+3\chi(\mathcal O_S)\, ,\,
\, \
c=\frac{1}{2}H(H-K_S)+\chi(\mathcal O_S)\, .$$ 
\end{conjecture}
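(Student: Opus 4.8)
The plan is to combine the universal structure provided by the Ellingsrud--G\"ottsche--Lehn formula \eqref{egleq} with the $K$-trivial evaluation of Theorem \ref{main}, thereby reducing the conjecture to two further universal series identities. First I would observe that Lehn's conjectured right-hand side is itself an exponential of a linear form in the four invariants $H^2$, $H\cdot K_S$, $K_S^2$, $c_2(S)$. Indeed, writing $w=w(z)$ for the inverse of the stated change of variable and taking logarithms,
\[
\log \frac{(1-w)^{a}(1-2w)^{b}}{(1-6w+6w^2)^{c}}= a\log(1-w)+b\log(1-2w)-c\log(1-6w+6w^2),
\]
and substituting the expressions for $a,b,c$ together with Noether's formula $\chi(\mathcal O_S)=\tfrac{1}{12}\bigl(K_S^2+c_2(S)\bigr)$ rewrites this as $H^2\,B_1(z)+(H\cdot K_S)\,B_2(z)+K_S^2\,B_3(z)+c_2(S)\,B_4(z)$ for four explicit power series $B_i$ read off coefficientwise. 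Comparing with \eqref{egleq}, and using that the four invariants take linearly independent values over a suitable finite collection of pairs $(S,H)$ --- for instance a $K3$, an abelian surface, $\mathbb P^2$, and $\mathbb P^1\times\mathbb P^1$ with appropriate polarizations --- the conjecture becomes equivalent to the four identities $A_i=B_i$ for $i=1,\dots,4$.

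Next I would dispatch two of these identities using results already in hand. On $K$-trivial surfaces one has $K_S=0$, so \eqref{egleq} collapses to $\exp\bigl(H^2 A_1(z)+c_2(S)A_4(z)\bigr)$. Evaluating on abelian surfaces, where $c_2=0$, isolates $A_1$, while Theorem \ref{main} on a $K3$ surface of degree $2\ell$ gives $\exp\bigl(2\ell\,A_1(z)+24\,A_4(z)\bigr)=\sum_n 2^n\binom{\ell-2n+2}{n}z^n$; since $\ell$ enters the exponent linearly, its $\ell$-linear and $\ell$-constant parts determine $A_1$ and $A_4$, and a direct check against the explicit series confirms $A_1=B_1$ and $A_4=B_4$. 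Thus only the two canonical-class series $A_2$ and $A_3$ remain.

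The main work, and the principal obstacle, is the determination of $A_2$ and $A_3$, which are invisible to $K$-trivial geometry. Here I would extend the equivariant localization of the virtual fundamental classes of Quot schemes --- the engine behind Theorem \ref{main} --- from $K3$ surfaces to surfaces carrying a nontrivial canonical bundle, and more systematically to the relative Quot schemes over a base of surfaces indicated in the second half of the paper. The $\mathbb C^\star$-fixed loci are again built from Hilbert schemes of points and symmetric products, but the virtual normal bundle now acquires contributions weighted by $K_S$; extracting these cleanly, and organizing the resulting residues, is the delicate point. The output should be a system of recursions for $N_{S,H,n}$ in which $H\cdot K_S$ and $K_S^2$ appear explicitly, refining the $K$-trivial recursions already solved.

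Finally, I would solve these recursions in closed form. Given the structural reduction, it suffices to check that the series $A_2,A_3$ produced by the localization recursions coincide with the explicit $B_2,B_3$ above, equivalently that the full generating function is uniformized by the rational substitution $z=w(1-w)(1-2w)^4/(1-6w+6w^2)^3$. I expect this last matching to reduce, as in the $K$-trivial case, to a generating-function computation via Lagrange inversion, the main subtlety being the bookkeeping of the canonical-class terms through the nonlinear change of variable.
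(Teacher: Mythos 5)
Your proposal does not prove the statement: what it actually establishes (modulo routine verification) are the two identities $A_1=B_1$ and $A_4=B_4$, i.e.\ Lehn's conjecture for surfaces with numerically trivial canonical bundle, and it then leaves the determination of $A_2$ and $A_3$ as an unexecuted research program. Your third and fourth paragraphs --- ``extend the equivariant localization to surfaces carrying a nontrivial canonical bundle,'' ``the output should be a system of recursions,'' ``I expect this last matching to reduce to Lagrange inversion'' --- contain no description of the fixed loci, no computation of virtual normal bundles, no derived recursion, and no solution of one. That is precisely where the difficulty lives. For a general pair $(S,H)$ the $\mathbb{C}^\star$-fixed kernels $E_1\oplus E_2$ are indexed by \emph{all} effective decompositions of the class $H$, not just the two splittings $\mathcal I_Z\oplus\mathcal I_W\otimes\mathcal O(-C)$ available when $H$ is irreducible of Picard rank one; the various linear series and their geometry enter the localization output, and there is no analogue of the argument that made the $K3$ recursion solvable (the vanishing of $\alpha_n(\ell)$ at $\ell=2n-2,\ldots,3n-3$ via invertibility of the matrix of Lemma \ref{shhs}, plus the leading coefficient extracted from the exponential form). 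Asserting that the residues can be ``organized'' and the recursions ``solved in closed form'' is a restatement of the conjecture, not a proof of it.

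For calibration: the paper itself does not prove this statement either --- it is presented there as Lehn's conjecture. The paper proves Theorem \ref{main} (the $K3$ evaluation) and deduces exactly what you deduce: the formulas \eqref{a1} and \eqref{a4} for $A_1$ and $A_4$, hence the conjecture for $K$-trivial surfaces, with Corollaries \ref{maincor} and \ref{maincor2} as consequences. Your treatment of $A_1,A_4$ via the linear dependence of the exponent on $\ell$ (set $\ell=0$ to isolate $A_4$, then vary $\ell$ to get $A_1$) and the matching of Theorem \ref{main} against Lehn's closed form coincide with the paper's Section \ref{LLL}, including the need for a Lagrange-inversion identity such as Lemma \ref{ident}. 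So the portion of your argument that is actually carried out reproduces the paper's partial result; the portion that would prove the conjecture as stated --- the series $A_2$ and $A_3$, which are invisible to $K$-trivial geometry --- is missing, and was in fact open at the time of this paper.
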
 
As usual in the study of the Hilbert scheme of points, 
Theorem \ref{main} determines two of the power series in \eqref {egleq}. 
Specifically, Theorem \ref{main}
implies \begin{equation}\label{a1}A_1\left(\frac{1}{2} t(1+t)^2\right)=\frac{1}{2} \log (1+t)\, ,\end{equation} \begin{equation}\label{a4}A_4\left(\frac{1}{2} t(1+t)^2\right)=\frac{1}{8} \log (1+t)-\frac{1}{24} \log (1+3t)\, .\end{equation} 
The evaluation of $A_1$ and $A_4$ proves
 Lehn's conjecture for all surfaces with numerically
trivial canonical bundle.

\begin{corollary}\label{maincor} 
If $(A, H)$ is an abelian or bielliptic surface of degree $2\ell$, then $$\int_{A^{[n]}} s_{2n} (H^{[n]})=\frac{2^{n}\ell}{n} \binom{\ell-2n-1}{n-1}\, .$$ \end{corollary}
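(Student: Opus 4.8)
The plan is to reduce the computation to the single universal series $A_1$ already pinned down by Theorem \ref{main}, exploiting the vanishing of all the other Chern numbers for these surfaces. First I would record the relevant numerical data: both abelian and bielliptic surfaces $A$ have numerically trivial canonical bundle, so $H\cdot K_A=0$ and $K_A^2=0$, and their topological Euler characteristic vanishes, giving $c_2(A)=0$. Feeding this into the Ellingsrud--G\"ottsche--Lehn form \eqref{egleq}, the contributions of $A_2$, $A_3$, and $A_4$ all drop out, leaving
$$\sum_{n=0}^{\infty} N_{A,H,n}\, z^n=\exp\big(H^2\cdot A_1(z)\big)=\exp\big(2\ell\, A_1(z)\big)\, ,$$
since $H^2=2\ell$.

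Next I would substitute the evaluation \eqref{a1} of $A_1$. Under the (formally invertible) change of variable $z=\tfrac12 t(1+t)^2$, equation \eqref{a1} gives $A_1(z)=\tfrac12\log(1+t)$, whence
$$\sum_{n=0}^{\infty} N_{A,H,n}\, z^n=\exp\big(\ell\log(1+t)\big)=(1+t)^{\ell}\, .$$
It then remains only to extract the coefficient of $z^n$ from $(1+t)^{\ell}$ with respect to the variable $z=\tfrac12 t(1+t)^2$, which is a direct application of Lagrange inversion.

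Writing the relation as $t=z\,\phi(t)$ with $\phi(t)=2/(1+t)^2$, so that $\phi(0)=2\neq 0$, and taking $f(t)=(1+t)^{\ell}$, the Lagrange--B\"urmann formula $[z^n]f=\tfrac1n[t^{n-1}]f'(t)\,\phi(t)^n$ yields
$$N_{A,H,n}=\frac{1}{n}[t^{n-1}]\,\ell(1+t)^{\ell-1}\cdot\frac{2^n}{(1+t)^{2n}}=\frac{2^n\ell}{n}[t^{n-1}](1+t)^{\ell-2n-1}=\frac{2^n\ell}{n}\binom{\ell-2n-1}{n-1}\, ,$$
which is precisely the claimed formula. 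I do not expect a genuine obstacle: once Theorem \ref{main} supplies the closed form \eqref{a1} for $A_1$, the remaining content is the bookkeeping of which Chern numbers vanish and the routine inversion. The one point worth double-checking is that the bielliptic case, being a free finite quotient of an abelian surface, indeed has $c_2=0$ and torsion (hence numerically trivial) canonical bundle, so that it falls under exactly the same specialization of \eqref{egleq} as the abelian case and no separate argument is needed.
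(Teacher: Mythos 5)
Your argument is correct, and it diverges from the paper's in the step where the actual coefficient extraction happens. The first reduction is identical: both you and the authors specialize \eqref{egleq} using $H\cdot K_A=K_A^2=c_2(A)=0$ to get $\sum_n N_{A,H,n}z^n=\exp(2\ell A_1(z))$, and both rely on the evaluation of $A_1$ coming from Theorem \ref{main}. From there the paper works in the $w$-variable, writes the series as $(1-2w)^{2\ell}/(1-6w+6w^2)^{\ell}$, and verifies the target binomial identity by taking Lemma \ref{ident} with $\ell\mapsto \ell-5$, multiplying by $dx$, recognizing the right-hand side as the exact differential $\frac{1}{\ell}\,d\bigl(\tfrac{1-2y}{1-3y}\bigr)^{\ell}$, and integrating term by term. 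You instead pass directly to the $t$-variable via \eqref{a1}, so the series becomes $(1+t)^{\ell}$ with $z=\tfrac12 t(1+t)^2$, and apply Lagrange--B\"urmann with $\phi(t)=2/(1+t)^2$; your computation $[z^n](1+t)^\ell=\tfrac{1}{n}[t^{n-1}]\ell(1+t)^{\ell-1}\cdot 2^n(1+t)^{-2n}=\tfrac{2^n\ell}{n}\binom{\ell-2n-1}{n-1}$ checks out. Your route is shorter and self-contained in the sense that it does not need Lemma \ref{ident} at all at this stage (the paper itself invokes the Schur--Jabotinsky theorem, i.e.\ essentially the same inversion principle, but only inside the base case of Lemma \ref{ident}); the paper's integration trick has the mild advantage of recycling the already-established Lemma \ref{ident} and staying within elementary manipulations of binomial generating functions. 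Your side remark about the bielliptic case is also right: such a surface is a free finite quotient of an abelian surface, so $c_2=0$ and $K$ is torsion, and no separate argument is needed.
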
 

\begin{corollary}\label{maincor2} 
If $(E, H)$ is an Enriques surface of degree $2\ell$, then 
$$\left(\sum_{n=0}^{\infty} {N_{E,H,n}}\, z^n\right)^2=\sum_{n=0}^\infty 2^n \binom{2\ell-2n+2}{n} z^n 
\, .$$ 
\end{corollary}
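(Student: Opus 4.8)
The plan is to reduce everything to the universal structure \eqref{egleq} of Ellingsrud--G\"ottsche--Lehn, and then to exploit the fact that the numerical invariants of an Enriques surface are exactly ``half'' those of a $K3$ surface. The essential observation is that for an Enriques surface $E$ the canonical bundle $K_E$ is $2$-torsion, hence numerically trivial, so that $H\cdot K_E=0$ and $K_E^2=0$. Since by \cite{EGL} the integrals $N_{E,H,n}$ depend only on the four numerical invariants $H^2$, $H\cdot K_E$, $K_E^2$, $c_2(E)$, the $A_2$ and $A_3$ contributions in \eqref{egleq} drop out entirely. Using $H^2=2\ell$ and $c_2(E)=12$, this yields
$$\sum_{n=0}^{\infty} N_{E,H,n}\, z^n = \exp\big(2\ell\, A_1(z) + 12\, A_4(z)\big)\, .$$

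First I would square this generating series, which simply doubles the exponent:
$$\left(\sum_{n=0}^{\infty} N_{E,H,n}\, z^n\right)^2 = \exp\big(4\ell\, A_1(z) + 24\, A_4(z)\big)\, .$$
The point is that this matches a $K3$ generating series. Indeed, a $K3$ surface $X$ has $K_X=0$ and $c_2(X)=24$, so the same reasoning applied to a $K3$ surface of degree $4\ell$ gives $\sum_n N_{X,H,n}\, z^n = \exp(4\ell\, A_1(z) + 24\, A_4(z))$; the doubling of the Euler characteristic from $c_2(E)=12$ to $c_2(X)=24$ is exactly what makes the squaring land on the $K3$ answer. Hence the squared Enriques series coincides with the $K3$ series in degree $4\ell$, and applying Theorem \ref{main} with $\ell$ replaced by $2\ell$ gives
$$\left(\sum_{n=0}^{\infty} N_{E,H,n}\, z^n\right)^2 = \sum_{n=0}^{\infty} 2^n \binom{2\ell-2n+2}{n}\, z^n\, ,$$
which is the claimed identity. (As a consistency check, one may instead substitute the explicit evaluations \eqref{a1} and \eqref{a4} of $A_1$ and $A_4$ and verify the resulting series equality directly.)

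The hard part is not really a computation but the conceptual point that only the \emph{numerical} class of $K_E$, and not its torsion nontriviality, can enter. This is precisely what \eqref{egleq} guarantees: the Segre integrals are universal polynomials in $H^2$, $H\cdot K_S$, $K_S^2$, $c_2(S)$, so the fact that $K_E\neq 0$ as a line bundle is invisible to these integrals. Once this is granted, the argument requires no further geometric input about the Hilbert scheme of the Enriques surface and reduces to the short formal manipulation above; the entire force of the corollary is carried by Theorem \ref{main} together with the universality result.
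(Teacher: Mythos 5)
Your proof is correct and follows essentially the same route as the paper: both rest on the universality \eqref{egleq}, the numerical triviality of $K_E$ together with $c_2(E)=12$, and the $K3$ evaluation of Theorem \ref{main}. The only difference is cosmetic --- the paper substitutes the explicit closed forms in $w$ and re-invokes Lemma \ref{ident} with $\ell$ replaced by $2\ell$, whereas you match the squared exponential $\exp(4\ell A_1(z)+24A_4(z))$ directly with the degree-$4\ell$ $K3$ series (realized, e.g., by the $K3$ double cover of $E$ with the pulled-back polarization, as the paper itself notes) and then quote Theorem \ref{main}.
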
 

\subsection {Strategy of the proof} The intersection 
theory of the Hilbert scheme of points can be approached
 via the inductive recursions set up in \cite {EGL} or via the Nakajima calculus \cite {L,N}. 
By these methods, the integration of tautological classes
is reduced to a combinatorial problem. 
Another strategy is to prove an equivariant
version of Lehn's conjecture 
for the Hilbert scheme of points of $\mathbb{C}^2$ via
appropriately weighted sums over partitions.
However, we do not know how to prove Theorem 1 along these lines.{\footnote{The
parallel problem in dimension 1, the calculation of the Segre
classes of tautological bundles over Hilbert schemes
of points of nonsingular {\it curves}, has been 
solved in \cite{Cott, LeB, Wang}.}}

Let $(X, H)$ be a nonsingular projective $K3$ surface.
We consider 
integrals over the Quot scheme $\pranktwo$ parametrizing quotients $$\mathbb C^2 \otimes \mathcal O_X \to F\to 0$$  where $F$ is a rank 0 coherent sheaf 
satisfying 
$$c_1(F)=H\ \ \  \text{and} \ \ \ \chi(F)=\chi\, .$$ The Quot scheme admits a reduced virtual class, and the integrals $$\int_{\left[\pranktwo\right]^{\text{red}}}\gamma\cdot
 0^{k}  $$  vanish for all $k> 0$ and for all choices of Chow classes $\gamma$. Here, the notation $0$ stands for the first Chern class of the trivial line bundle 
 $$c_1(\mathcal O)=0\in A^1(\pranktwo).$$ Virtual localization \cite{GP} with respect to a $\mathbb{C}^\star$
 action applied to the above integrals yields linear recursions between the expressions $$\int_{X^{[n]}} c_{n-i}(H^{[n]}) s_{n+i} (H^{[n]})\, . $$ 
The linear recursions are trivial for all but finitely many values of $H^2=2\ell$. The nontrivial recursions  can be solved to show that the top Segre integrals vanish for the values $$2n-2\leq \ell\leq 3n-3\, .$$ These
vanishings determine the intersections up to an ambiguity given by the leading term of a polynomial which we can calculate explicitly. 

\subsection {The moduli space of $K3$ surfaces} The relations used
to prove Theorem 1 fit into a wider 
program aimed at studying the tautological rings of the moduli space of surfaces. Consider the relative Quot scheme $\mathcal Q^{\text{rel}}_{H, \,\chi} (\mathbb C^2)$ over a family of smooth surfaces. We evaluate on the base, via equivariant localization,  the vanishing pushforwards 
$$\pi_*\left( \gamma\cdot 0^{k} \, \cap \,
{\left[\mathcal Q^{\text{rel}}_{H, \,\chi} (\mathbb C^2)\right]^{\text{vir}}}
\right),$$ for all $k> 0$ and various choices of Chow classes $\gamma$. The Segre classes of the tautological bundles over the relative Hilbert schemes of points appear naturally in the localization output. In cohomological degree zero on the base, the resulting equations lead to Lehn's formulas above. In higher cohomological degree, the analysis of the localization output is increasingly harder, and the calculations are more intricate. They give rise to new and rich systems of relations among tautological classes on moduli spaces of surfaces and their relative Hilbert schemes of points. 

We illustrate this program in Section \ref{MMM} by concrete examples for the moduli of quasipolarized $K3$ sufaces. We obtain in this fashion relations
intertwining the $\kappa$ classes and the Noether-Lefschetz loci. These calculations point to general conjectures.

\subsection {Plan of the paper}
Section \ref{111} concerns localization on Quot schemes.
Foundational aspects of the
 virtual classes of Quot schemes  on surfaces $S$
are discussed in Section \ref{alice}.
The 
virtual localization formula for the $\mathbb{C}^\star$
action on the Quot schemes of $K3$ surfaces is presented
in Section \ref{bob}.
Explicit localization relations are derived in Section \ref{cat}.

Theorem \ref{main} is proven in Section \ref{pppp} by solving the
recursion relations of Section \ref{cat}. 
In Section \ref{LLL}, the connections between Theorem \ref{main}
and Lehn's conjecture are explained (and Corollaries 1 and 2 are
proven). An application to elliptically fibered surfaces 
is given in Corollary 3 of Section \ref{LLL}. 

In Section \ref{MMM}, we discuss the tautological classes of the moduli of $K3$ surfaces. In particular, we write down relations in the tautological ring and formulate conjectures.

\subsection {Acknowledgements} 
We thank N. Bergeron, G. Farkas, M. Lehn, D. Maulik, G. Oberdieck, and Q. Yin
for several discussions related to
tautological classes, Quot schemes, and the moduli space of 
$K3$ surfaces. The study of the
relations presented here was undertaken during 
a visit of A.M. and D.O. in the spring of 2015 to the {\it Institute for
Theoretical Sciences} at ETH Z\"urich (and supported
in part by SwissMAP).

A.M. was supported by the NSF through grant DMS 1303389. D. O. was supported by the Sloan Foundation and the NSF through grants DMS 1001486 and DMS 1150675. 
R.P. was supported by the Swiss National Science Foundation and
the European Research Council through
grants SNF-200021-143274 and ERC-2012-AdG-320368-MCSK.
R.P was also supported by SwissMAP and the Einstein Stiftung in Berlin.

\section {Localization on the Quot scheme}  \label{111}

\subsection{Geometric setup.} \label{alice}
Let $S$ be a nonsingular
 projective surface equipped with a divisor class $H$. We consider the 
Quot scheme $\pranktwor$ parametrizing short exact sequences $$0\to E\to \mathbb C^r\otimes \mathcal O_{S} \to F\to 0$$ where $F$ is a rank 0
coherent sheaf satisfying 
$$c_1 (F) = H\ \ \ \text{and} \ \ \ \chi (F) = \chi\, .$$ 

With the exception of the Hilbert scheme of points
$$S^{[n]} = {{\mathcal Q_{\,0, \,\chi}(\mathbb C^1)}}\, $$ 
the intersection theory of Quot schemes over surfaces has not
been extensively studied. 
Rank $1$ calculation can be found in \cite{DKO}, and
higher rank calculations over del Pezzo surfaces were 
considered in \cite{S}.

In comparison, the intersection theory of the Quot scheme of a {\it curve}  may be 
pursued in a virtual sense for a fixed curve \cite{MO} or  
by letting the curve vary via the moduli space of
stable quotients \cite {MOP}. The relations in the
tautological ring of the moduli of curves \cite{Ja,P,PP} 
via virtual localization  on the moduli 
stable quotients are parallel to the relations we introduce in Section \ref{MMM}.

Fundamental to our study is the following result (which we will
use here only in the  $r=2$ case).

\begin{lemma} 
The Quot scheme $\pranktwor$ admits a canonical
perfect obstruction theory 
of virtual dimension  $r\chi+ H^2$. 
\end{lemma}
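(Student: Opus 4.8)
The plan is to produce the obstruction theory directly from the universal quotient and to verify its two defining features — perfect amplitude in $[-1,0]$ and the value of the virtual dimension — by a pointwise $\operatorname{Ext}$ computation together with Hirzebruch--Riemann--Roch. First I would invoke Grothendieck's deformation theory of the Quot functor. Over $S\times\pranktwor$ there is a universal short exact sequence
$$0\to \mathcal E\to \mathbb C^r\otimes\mathcal O\to \mathcal F\to 0,$$
and, writing $\pi\colon S\times\pranktwor\to\pranktwor$ for the projection, the Zariski tangent space at a quotient $[\,0\to E\to \mathbb C^r\otimes\mathcal O_S\to F\to 0\,]$ is $\operatorname{Hom}(E,F)$ while obstructions lie in $\operatorname{Ext}^1(E,F)$. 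Accordingly I would take
$$E^{\bullet}=\big(R\pi_{*}\,R\mathcal{H}om(\mathcal E,\mathcal F)\big)^{\vee}$$
as the candidate complex, with its standard morphism to the cotangent complex $L_{\pranktwor}$. This morphism, and its compatibility with the tangent and obstruction spaces above, are supplied by the general Behrend--Fantechi formalism for moduli of sheaves and quotients, so I would cite that machinery rather than reconstruct it; it is precisely this construction that makes the resulting obstruction theory \emph{canonical}.

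The step genuinely specific to surfaces — and the crux of the argument — is to show that $E^{\bullet}$ has perfect amplitude in $[-1,0]$, equivalently that $R\pi_{*}R\mathcal{H}om(\mathcal E,\mathcal F)$ is represented by a two-term complex of bundles in degrees $[0,1]$. Since $\operatorname{Ext}^{i}(E,F)$ vanishes automatically for $i<0$ and $i>2$, cohomology and base change reduce this to the pointwise vanishing $\operatorname{Ext}^{2}(E,F)=0$. Here I would apply Serre duality on the surface,
$$\operatorname{Ext}^{2}(E,F)\cong \operatorname{Hom}(F,\,E\otimes K_S)^{\vee},$$
and observe that $E$, being the kernel of a map out of the locally free sheaf $\mathbb C^r\otimes\mathcal O_S$, is torsion-free, whereas $F$ has rank $0$ and is therefore torsion. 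Any homomorphism from a torsion sheaf to a torsion-free sheaf is zero, so the right-hand side vanishes. This forces the claimed amplitude and yields the perfect obstruction theory.

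Finally I would compute the virtual dimension, which by the vanishing just established equals
$$\operatorname{ext}^{0}(E,F)-\operatorname{ext}^{1}(E,F)=\chi(E,F).$$
From the universal sequence one has $\operatorname{ch}(E)=r-\operatorname{ch}(F)$, and writing $\operatorname{ch}(F)=H+\operatorname{ch}_2(F)$ with rank $0$, the constraint $\chi(F)=\chi$ fixes $\operatorname{ch}_2(F)$ through Riemann--Roch. Hirzebruch--Riemann--Roch then gives
$$\chi(E,F)=\int_S \operatorname{ch}(E)^{\vee}\,\operatorname{ch}(F)\,\operatorname{td}(S)=r\chi+H^2,$$
where the contributions involving $K_S$ cancel in the expansion. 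This produces the stated virtual dimension $r\chi+H^2$. The only delicate input is the $\operatorname{Ext}^{2}$-vanishing; the remaining ingredients are the standard Quot-scheme obstruction theory and a routine Chern-character computation, so I expect no further obstacles.
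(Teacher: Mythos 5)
Your proposal is correct and follows essentially the same route as the paper: both reduce everything to the Serre-duality vanishing $\operatorname{Ext}^2(E,F)\cong \operatorname{Hom}(F,E\otimes K_S)^\vee=0$ (the paper via the injection into $\operatorname{Hom}(F,\mathbb C^r\otimes K_S)$, you via torsion-freeness of $E$ — the same observation), and then identify the virtual dimension with $\chi(E,F)=r\chi+H^2$. The extra detail you supply on the Behrend--Fantechi complex and the Riemann--Roch expansion is exactly what the paper delegates to the curve-case reference \cite{MO}.
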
 

\proof Since the details are similar to the curve case \cite {MO}, we only discuss the main points. 
The obstruction theory of the Quot scheme is governed by the groups $\text{Ext}^{i}(E, F)$. 
We claim the vanishing $$\text{Ext}^2(E, F)=\text{Ext}^0(F, E\otimes K_S)^{\vee}=0\, .$$ Indeed, since $E$ is a subsheaf of $\mathbb C^r\otimes \mathcal O_S$, the latter group injects $$\text{Ext}^0(F, E\otimes K_S)\hookrightarrow \text{Ext}^0(F, \mathbb C^r\otimes K_S)=0\, ,$$ where the last vanishing follows since $F$ is torsion. As a consequence, the difference 
\begin{equation}\label{ddff}
\text{Ext}^0(E, F)-\text{Ext}^1(E, F)=\chi(E, F)=  r\chi+ H^2
\end{equation}
is constant. 

Since the higher obstructions vanish, the
 moduli space $\pranktwor$ carries a virtual fundamental class
of dimension \eqref{ddff}. 
\qed
\vskip.1in
Let $(X, H)$ be a primitively polarized $K3$ surface of degree $2\ell$
and
Picard rank 1. 
Let $$H^2 = 2\ell \, \ \text{and}\ n=\chi+\ell\, .$$ 
In the $K3$ case, we show the Quot scheme admits a reduced virtual fundamental class.

\begin{lemma} \label{redclass} For a $K3$ surface $(X, H)$ there is a natural surjective map $$\text{Ext}^{1}(E, F)\to \mathbb C\, ,$$ and a reduced virtual fundamental class $\left [\pranktwor\right]^{\text{red}}$ of dimension $r\chi+2\ell+1$.
\end{lemma}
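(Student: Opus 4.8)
The plan is to exhibit a surjective cosection of the obstruction sheaf built from the holomorphic symplectic structure of the $K3$ surface, and then to reduce the perfect obstruction theory of Lemma 1 along this trivial quotient. At a point $[0\to E\to \mathbb C^r\otimes \mathcal O_X\to F\to 0]$ of $\pranktwor$, recall from the proof of Lemma 1 that the obstruction space is $\text{Ext}^1(E,F)$ and that $\text{Ext}^2(E,F)=0$. Since $K_X\cong \mathcal O_X$, Serre duality supplies a canonical isomorphism $\text{Ext}^1(E,F)\cong \text{Ext}^1(F,E)^\vee$. I would take the map to $\mathbb C$ to be the composite
$$\text{Ext}^1(E,F)\xrightarrow{\ \partial\ }\text{Ext}^2(E,E)\xrightarrow{\ \mathrm{tr}\ }H^2(X,\mathcal O_X)\cong \mathbb C,$$
where $\partial$ is the connecting homomorphism obtained by applying $\text{Hom}(E,-)$ to the tautological sequence, and $\mathrm{tr}$ is the trace.

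The connecting map $\partial$ is Yoneda multiplication by the extension class $e\in \text{Ext}^1(F,E)$ of the sequence $0\to E\to \mathbb C^r\otimes \mathcal O_X\to F\to 0$, so by the compatibility of the Yoneda product and the trace with Serre duality the whole composite is identified with the pairing $\alpha\mapsto \langle \alpha, e\rangle$. Because the Serre pairing is perfect, this functional is surjective precisely when $e\neq 0$. The non-vanishing of $e$ is immediate: $F$ is a nonzero torsion sheaf (indeed $c_1(F)=H\neq 0$), while $\mathbb C^r\otimes \mathcal O_X$ is torsion-free, so the tautological sequence admits no splitting and $e\neq 0$ at every point. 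Hence the map is surjective everywhere.

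The construction uses only the universal sequence, so the pointwise maps glue to a global cosection of the obstruction sheaf onto the trivial line bundle $\mathcal O$ over $\pranktwor$, surjective at every point. A globally surjective cosection onto a trivial summand lets one reduce the obstruction theory in the usual way: the kernel of the cosection is the reduced obstruction sheaf, the complex of Lemma 1 is modified accordingly, and the resulting reduced virtual class $[\pranktwor]^{\mathrm{red}}$ has dimension one larger than the virtual dimension $r\chi+H^2=r\chi+2\ell$, namely $r\chi+2\ell+1$. The step requiring the most care is the passage from a pointwise statement to a genuine reduced \emph{perfect} obstruction theory in families: one must check that the cosection is surjective everywhere, not merely generically, which is exactly what the non-splitting argument above guarantees and what distinguishes a true reduced class from a Kiem--Li localized one.
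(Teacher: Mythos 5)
Your proof is correct and follows the same overall strategy as the paper's: build a cosection of the obstruction space as a connecting homomorphism followed by the trace, check it is surjective at every point, and reduce. The differences are in the details and are worth noting. First, you route the cosection through $\mathrm{Ext}^2(E,E)$ via $\mathrm{Hom}(E,-)$, while the paper routes it through $\mathrm{Ext}^2(F,F)$ via $\mathrm{Hom}(-,F)$; since both composites are (up to the sign coming from graded commutativity of the trace under Yoneda composition) the Serre pairing against the extension class $e$, they define the same functional, so this is cosmetic. Second, and more substantively, your surjectivity argument is different: you invoke perfectness of the Serre pairing $\mathrm{Ext}^1(E,F)\times\mathrm{Ext}^1(F,E)\to H^2(\mathcal O_X)$ and reduce everything to $e\neq 0$, which you get from non-splitting of the tautological sequence ($F$ is nonzero torsion, $\mathbb C^r\otimes\mathcal O_X$ is torsion-free). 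The paper instead shows the stronger statement that $\mathrm{Ext}^1(E,F)\to\mathrm{Ext}^2(F,F)$ is already surjective, by identifying its cokernel inside $\mathrm{Ext}^2(\mathbb C^r\otimes\mathcal O_X,F)=H^2(F)^{\oplus r}=0$ (as $F$ has $1$-dimensional support), and then uses surjectivity of the trace. Your route is the more conceptual ``pair against the Atiyah/extension class'' argument and makes transparent exactly where nontriviality of the quotient enters; note that the analogous direct cokernel computation would not work for your choice of composite, since $\mathrm{Ext}^2(E,\mathbb C^r\otimes\mathcal O_X)\cong H^0(E)^{\vee\oplus r}$ need not vanish, so the duality step is genuinely needed in your version. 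The paper's route is more elementary (a long exact sequence plus a support-dimension vanishing) and avoids any discussion of signs and compatibilities of the Yoneda product with Serre duality. Your closing remarks on gluing to a global cosection and on everywhere (not generic) surjectivity being what yields a true reduced class rather than a Kiem--Li localized one are accurate, and the dimension count matches the paper's $\chi(E,F)+1=r\chi+H^2+1$.
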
 

\proof The argument is standard. Indeed, the defining short exact sequence $$0\to E\to \mathbb C^r\otimes \mathcal O_X\to F \to 0$$  induces a natural morphism $$\text{Ext}^1(E, F)\to \text{Ext}^2(F, F)\stackrel{\text{Trace}}{\to} H^2(\mathcal O_X)=\mathbb C\, .$$ To prove surjectivity of the composition, 
it suffices to show $$\text{Ext}^1(E, F)\to \text{Ext}^2(F, F)$$ is surjective, since the trace is surjective. The cokernel of the map is identified with $$\text{Ext}^2(\mathbb C^r, F)=H^2(F)\otimes \mathbb C^r$$ which vanishes since $F$ has 1-dimensional support. The reduced virtual dimension equals 
$$\chi(E, F)+1=\chi(\mathbb C^r, F)-\chi(F, F)+1=r\chi+H^2+1
=r\chi+2\ell+1\, ,$$ 
where the summand $1$ on the left is due to the reduction.
\qed
\vskip.1in
In the $r=2$ case, the virtual dimension formula specializes to 
$$2(\chi+\ell)+1=2n+1\, .$$ 

\subsection{Virtual localization} \label{bob}
We continue to work with a 
primitively polarized $K3$ surface $(X,H)$ of degree $2\ell$
and Picard rank 1.

We study the virtual intersection theory of the Quot scheme $\pranktwo$ via 
the equivarint localization. To this end, consider the diagonal $\mathbb C^{\star}$ action, 
$$\mathbb C^2=\mathbb C[0]+\mathbb C[1]\, ,$$ with weights $0$ and $1$ on the two summands. 
A torus action on $\pranktwo$ is canonically induced. For a top degree Chow class $\alpha$ over the Quot scheme, the virtual localization formula of \cite {GP} reads: \begin{equation}\label{gpvi}\int_{\left[\pranktwo\right]^{\text{vir}}}\alpha=\sum_{\mathsf F} \int_{\left[\mathsf F\right]^{\text{vir}}}\frac{\tilde \alpha|_{\mathsf F}}{{\mathsf e}(\mathsf N_{\mathsf F}^{\text{vir}})}.\end{equation} On the right side, $\tilde \alpha$ is any lift of $\alpha$ to equivariant Chow. In addition, 
\begin {itemize}
\item the $\mathsf F$'s are the torus fixed loci of $\pranktwo$, 
\item $\mathsf N_{\mathsf F}^{\text{vir}}$ are the virtual normal complexes of the fixed loci,
\item ${\mathsf e}(\,)$ stands for the equivariant Euler class.
\end{itemize} For nonsingular projective varieties (endowed with the trivial perfect obstruction theory), equation \eqref{gpvi} specializes to the standard Atiyah-Bott localization formula \cite {AB}.

We turn now to the analysis of the fixed point loci. The torus fixed quotients come from kernels which split into rank $1$ factors $$0\to E=E_1\oplus E_2\hookrightarrow \mathcal O_X\oplus \mathcal O_X\to F=F_1\oplus F_2\to 0\, .$$ 
Since $c_1(F)=H$, we have  $$-c_1(E_1)-c_1(E_2)=H\, .$$ 
The two sections show that $E_1^{\vee}$ and $E_2^{\vee}$ are effective 
line bundles. 
Since the curve class $H$ is irreducible, the kernel must split as 
$$E_1=\mathcal I_Z, \ \  E_2=\mathcal I_W\otimes \mathcal O_X (-C)
\ \ \ \ \ \text{or} \ \ \ \ \ 
E_1=\mathcal I_Z\otimes \mathcal O_X(-C), \ \  
E_2=\mathcal I_W\, ,
 $$ where $C$ is a curve in the linear series $|H|$ and
 $$Z,W\subset X$$ are 0-dimensional subschemes of lengths $z$ and $w$ respectively. The condition $\chi(F)=\chi$ is equivalent to $$z+w=\ell+\chi=n\, .$$

For each value of $z$ and $w$, we obtain two distinct fixed loci, both isomorphic to the product $$X^{[z]}\times X^{[w]} \times \mathbb P$$ where the projective space $$\mathbb P=|H|$$ denotes the linear series in which the curve $C$ varies. 
\vskip.1in
We now turn to the virtual class and the virtual normal bundle of the fixed loci.  
Except in the two cases
\begin{enumerate}
\item[$\bullet$]
$(z, w)=(0, n)$ and $E_1\oplus E_2 = \mathcal O_X \ \oplus\ 
\mathcal I_W\otimes \mathcal O_X (-C)$ ,
\item[$\bullet$]
$(z, w)=(n, 0)$ and $E_1\oplus E_2 =  
\mathcal I_Z\otimes \mathcal O_X (-C)\ \oplus\ \mathcal O_X$ , 
\end{enumerate}
the reduced obstruction bundle has an {\it additional} trivial summand 
which forces the  localization contribution in \eqref{gpvi} to vanish.  
The non-reduced obstruction bundle is obtained by considering the fixed part of $\text{Ext}^1(E, F)$ which splits as 
$$\text{Ext}^1(E_1, F_1)+\text{Ext}^1(E_2, F_2)\, .$$ 
By the proof of Lemma \ref{redclass},
if $F_1$ and $F_2$ are both non-zero,
 we have two surjective maps $$\text{Ext}^1(E_1, F_1)\to \mathbb C, \,\,\,\, \text{Ext}^1(E_2, F_2)\to \mathbb C\, .$$ Since the reduced
virtual class is obtained by reducing only one trivial factor, a
trivial factor still remains. 
\vskip.1in

We now analyze the two surviving fixed components 
$\mathsf F^+$ and $\mathsf F^-$ 
associated to the splittings
 $$E_1\oplus E_2 = \mathcal O_X \ \oplus\ 
\mathcal I_W\otimes \mathcal O_X (-C) \ \ \ \ \text{and}\ \ \ \
E_1\oplus E_2 =  
\mathcal I_Z\otimes \mathcal O_X (-C)\ \oplus\ \mathcal O_X$$
respectively.
 The isomorphisms
$$\mathsf F^+\simeq \mathsf F^-\simeq X^{[n]}\times \mathbb P$$
are immediate. 

Let $\mathcal L$ be the hyperplane class on $\mathbb P,$ $$\mathcal L = 
\mathcal O_{\mathbb P} (1)\, .$$ The line bundle associated to the universal curve $\mathcal C\hookrightarrow X\times \mathbb P$ of the linear series $|H|$ is
 $$\mathcal O(\mathcal C)=H\otimes \mathcal L\, .$$ On  the universal surface of the fixed locus $$\mathsf F^{+} \times X\to \mathsf F^{+},$$ the universal subsheaves are $$\mathcal E_1=\mathcal O, \ \ \ \mathcal E_2=\mathcal I_{\mathcal W}\otimes \mathcal O(-\mathcal C)=\mathcal I_{\mathcal W}\otimes H^{-1}\otimes \mathcal L^{-1}\, .$$ 
To find the reduced virtual class of $\mathsf F^+$,
 we study the fixed part of the reduced
obstruction theory: \begin{eqnarray*}(\text{Tan}-\text{Obs})^{\text{fix}}&=&\text{Ext}^{\bullet}(\mathcal E_2, \mathcal F_2)+\mathbb C\\&=&\text{Ext}^{\bullet}(\mathcal I_{\mathcal W}\otimes H^{-1}\otimes \mathcal L^{-1}, \mathbb C-\mathcal I_{\mathcal W}\otimes H^{-1}\otimes \mathcal L^{-1})+\mathbb C\\ &=&\text{Ext}^{\bullet}(\mathcal I_{\mathcal W}\otimes H^{-1}\otimes \mathcal L^{-1}, \mathbb C) - \text{Ext}^{\bullet}(\mathcal I_{\mathcal W}, \mathcal I_{\mathcal W})+\mathbb C\\&=&\text{Ext}^{\bullet}(\mathcal I_{\mathcal W}\otimes H^{-1}, \mathbb C)\otimes \mathcal L- \text{Ext}^{\bullet}(\mathcal I_{\mathcal W}, \mathcal I_{\mathcal W})+\mathbb C\\&=&H^0(X, H)\otimes \mathcal L -\text{Ext}^{\bullet}(\mathcal O_{\mathcal W}\otimes H^{-1}, \mathbb C)\otimes \mathcal L -\text{Ext}^{\bullet}(\mathcal I_{\mathcal W}, \mathcal I_{\mathcal W})+\mathbb C\, .
\end{eqnarray*} The latter
 expression simplifies using the following four observations:
\begin{enumerate}
\item[(i)]
$\text{Ext}^1(\mathcal I_{W}, \mathcal I_{W})\simeq \text{Ext}^0(\mathcal I_W,\mathcal O_W)$ is the tangent space to $X^{[n]}$
at $W$,
\item[(ii)] $\text{Ext}^2(\mathcal I_{W}, \mathcal I_{W})=\text{Ext}^0(\mathcal I_W, \mathcal I_W)=\mathbb C$ by Serre duality,
\item[(iii)] 
$\text{Ext}^{0}(\mathcal O_{W}\otimes H^{-1}, \mathbb C)=
\text{Ext}^{1}(\mathcal O_{W}\otimes H^{-1}, \mathbb C)=0$
since $W$ is of dimension 0,
\item[(iv)]
 $\text{Ext}^{2}(\mathcal O_{W}\otimes H^{-1}, \mathbb C)=H^0(H^{-1}\otimes \mathcal O_W)^{\vee}$ by Serre duality.
\end{enumerate}
Applying (i-iv), we find
 \begin{eqnarray*}(\text{Tan}-\text{Obs})^{\text{fix}}&=&\mathbb C^{\ell+2}\otimes \mathcal L- \mathcal L\otimes \left((H^{-1})^{[n]}\right)^{\vee}+\text{Tan}_{X^{[n]}}-\mathbb C\\&=& \text{Tan\,}_{\mathbb P}- \mathcal L\otimes \left((H^{-1})^{[n]}\right)^{\vee}+\text{Tan}_{X^{[n]}}
\end{eqnarray*} where we have also 
used the Euler sequence $$0\to \mathcal O\to \mathbb C^{\ell+2} \otimes \mathcal L\to \text{Tan\,}_{\mathbb P}\to 0\, .$$ 
Therefore, over the fixed locus $\mathsf F^+$, we have the obstruction bundle $$\text{Obs}=\mathcal L\otimes \left((H^{-1})^{[n]}\right)^{\vee}.$$ As explained in Proposition 5.6 of \cite {BF}, the virtual class of the fixed locus $\mathsf{F}^+$ is obtained by taking the Euler class of the locally free \text{Obs}: $$\left[\mathsf F^{+}\right]^{\text{vir}}=\mathsf e\left(\mathcal L\otimes \left((H^{-1})^{[n]}\right)^{\vee}\right).$$ 
The analysis of the fixed part of the reduced obstruction theory for 
$\mathsf{F}^-$ is identical. We find that the virtual class of the
fixed locus is
 $$\left[\mathsf F^{-}\right]^{\text{vir}}=\mathsf e\left(\mathcal L\otimes \left((H^{-1})^{[n]}\right)^{\vee}\right)$$
just as for $\mathsf{F}^+$.

Turning to the normal bundle, we study
 the moving part of the reduced
obstruction theory on $\mathsf{F}^+$:
\begin{eqnarray*}
 \mathsf N^{+} &= &\text{Ext}^{\bullet}(E_1, F_2)+\text{Ext}^{\bullet}(E_2, F_1)\\
& = & \text{Ext}^{\bullet}(\mathcal O_X, F_2)\\
& =& H^{\bullet}(F_2)\\
&=& H^{\bullet}(\mathcal O_X)-H^{\bullet}(E_2)\\
&=&\mathbb C+\mathbb C-H^{\bullet}(E_2)\, .
\end{eqnarray*}
 In families, $H^{\bullet}(E_2)$
equals $$H^{\bullet}(H^{-1}\otimes \mathcal L^{-1}\otimes \mathcal I_{\mathcal W}) =\mathbb C^{\ell+2}\otimes \mathcal L^{-1}-H^{\bullet}(H^{-1}\otimes \mathcal O_{\mathcal W})\otimes \mathcal L^{-1}.$$
Hence, we find $$\mathsf N^{+}=\left(\mathbb C+\mathbb C+\mathcal L^{-1} \otimes (H^{-1})^{[n]}-\mathcal L^{-1} \otimes \mathbb C^{\ell+2} \right)[1]\, ,$$
where the $[1]$ indicates the torus weight.
Similarly,
 $$\mathsf N^{-}=\left(\mathbb C+\mathbb C+\mathcal L^{-1} \otimes (H^{-1})^{[n]}-\mathcal L^{-1} \otimes \mathbb C^{\ell+2} \right)[-1]\, ,$$

The normal complexes $\mathsf N^{\pm}$ have virtual rank $ n-\ell.$ 
Taking into account the opposite equivariant weights for the two fixed loci, we write the inverses of the Euler classes of the virtual normal bundles over the two fixed loci as
\begin{eqnarray*} \frac{1}{\mathsf e (\mathsf N^+)} &= &\frac{(1-\zeta)^{\ell+2}}{c_{+} ((H^{-1})^{[n]} \otimes \mathcal L^{-1} ) }\, , \\ \frac{1}{\mathsf e(\mathsf N^{-})}&= & (-1)^{n-\ell}\cdot \frac {(1+ \zeta)^{\ell+2}}{c_- ((H^{-1})^{[n]} \otimes \mathcal L^{-1})}.
\end{eqnarray*}
where $\zeta$ is the hyperplane class on $\mathbb{P}$.
We have used the Chern class notation
 $$c_+ = \sum_{i\geq 0} c_i\, \ \ \ \text{and} \ \ \
c_{-} = \sum_{i \geq 0} (-1)^i c_i\, .$$
\subsection{The calculation} \label{cat}

Let $\zeta$ also the denote the pull-back of the hyperplane class under the support morphism $$\pranktwo\to \mathbb P\, , \ \ \ \ \ \left[0\to E\to \mathbb C^2\otimes \mathcal O_X\to F\to 0\right]\mapsto \text{supp } F\, .$$ Consider the integrals $$ 0 =  I_k = \frac{(-1)^n}{2}\int_{[\pranktwo]^{\text{red}}} \, \zeta^{2k-1} \cdot 0^{2n+2 -2k}\, , \, \, \, \, \, \, 1 \leq k \leq n\, ,$$ against the reduced virtual class 
$$\left[\pranktwo\right ]^{\text{red}} \in A_{2n+1} \left (\left[\pranktwo\right]^{\text{red}} \right )\, .$$ The notation $0$ stands for the first Chern class of the trivial line bundle $$c_1(\mathcal O)=0\in A^1(\pranktwo).$$

We will express the vanishing integrals $I_k$ using the virtual localization formula \eqref{gpvi}. This requires specifying equivariant lifts of the classes involved. First, the reduced virtual class admits a canonical lift to the
$\mathbb C^{\star}$-equivariant cycle theory. The action of $\mathbb{C}^\star$ on $\zeta$ is trivial. Finally, we lift the 
$\mathbb{C}^\star$ action on $0$ as the
equivariant first Chern class of the representation $[1]$.

By examining the overall signs, the two non-vanishing fixed loci $\mathsf F^+$ and $\mathsf F^-$ can be seen to have identical contribution. We obtain:

\begin{eqnarray*} I_k &=& \int_{X^{[n]} \times \mathbb P} \zeta^{2k-1} \frac{(1-\zeta)^{\ell+2} \,\mathsf e \left((H^{-1})^{[n]} \otimes \mathcal L^{-1}\right)}{c \left((H^{-1})^{[n]} \otimes \mathcal L^{-1}\right )} \\ 
& = & \int_{X^{[n]} \times \mathbb P}\zeta^{2k-1} (1-\zeta)^{\ell+2} \left ( \sum_{i=0}^n c_{n-i} \left((H^{-1})^{[n]}\right) (-\zeta)^i \right ) \left ( \sum_{j\geq 0} \frac{s_j \left((H^{-1})^{[n]} \right)}{(1-\zeta)^{n+j}} \right ) \\
& = & \sum_{i, j \geq 0} \int_{X^{[n]} \times \mathbb P} (-1)^i\cdot \zeta^{2k + i -1} \, (1-\zeta)^{\ell+2 -n -j}\, \cdot c_{n-i} \left((H^{-1})^{[n]}\right) \cdot s_j \left((H^{-1})^{[n]}\right) \\
& = & (-1)^{\ell}\sum_{i=0}^{n} \binom{\ell+2 -i-2n}{\ell+2 - i -2k}  \, \int_{X^{[n]}} c_{n-i} \left((H^{-1})^{[n]}\right)\cdot s_{n+i} \left((H^{-1})^{[n]}\right)\, .
\end{eqnarray*}

\vskip.2in

\noindent Setting $$\alpha_i=\int_{X^{[n]}} c_{n-i}\left((H^{-1})^{[n]}\right)\cdot s_{n+i} \left((H^{-1})^{[n]}\right) \, \, \, \, \text{for} \, \, \, \, 0\leq i\leq n\, ,$$ we have proven the following result.

\begin{lemma}\label{identities} For all $1\leq k\leq n$, we have $$\sum_{i=0}^{n}\binom{\ell+2-i-2n}{\ell+2-i-2k}\alpha_i=0\, .$$
\end{lemma}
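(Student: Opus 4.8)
The plan is to evaluate the integrals $I_k$ in two ways and compare. The first evaluation is immediate: for $1\leq k\leq n$ the exponent $2n+2-2k$ is at least $2$, so the integrand $\zeta^{2k-1}\cdot 0^{2n+2-2k}$ contains a strictly positive power of the class $0 = c_1(\mathcal O)\in A^1(\pranktwo)$, which is literally zero in the Chow ring. Hence $I_k = 0$ for every $k$. The nontrivial content of the lemma comes from a \emph{second} evaluation of these same integrals by equivariant localization, and the asserted relations are precisely the statement that this second evaluation also vanishes.

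For the localization computation I would apply \eqref{gpvi} for the diagonal $\mathbb C^\star$ action after fixing equivariant lifts. The reduced virtual class lifts canonically, $\zeta$ carries the trivial action, and --- the crucial point --- the class $0$ is lifted as the equivariant first Chern class $t$ of the weight-one representation $[1]$, so that $0^{2n+2-2k}$ lifts to $t^{2n+2-2k}$ rather than to zero. It is exactly this nontrivial lift of a non-equivariantly trivial class that produces a nonzero localization output, hence a genuine relation. By the fixed-point analysis already carried out, every component with both $F_1$ and $F_2$ nonzero retains a trivial summand in its reduced obstruction bundle and contributes zero; only $\mathsf F^+$ and $\mathsf F^-$, each isomorphic to $X^{[n]}\times\mathbb P$, survive.

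Into \eqref{gpvi} I would then substitute the data computed above: the fixed-locus virtual classes $[\mathsf F^{\pm}]^{\mathrm{vir}} = \mathsf e(\mathcal L\otimes((H^{-1})^{[n]})^{\vee})$ and the inverse Euler classes $1/\mathsf e(\mathsf N^{\pm})$. Since the whole localization sum is homogeneous of degree zero in $t$ (the equivariant integral lands in $A^0_{\mathbb C^\star}(\mathrm{pt}) = \mathbb Q$), I may normalize $t=1$; this turns the lift $t^{2n+2-2k}$ into $1$ and the moving weights $t-\zeta$ into $1-\zeta$. After checking that the opposite weights $[1]$ and $[-1]$ on $\mathsf N^+$ and $\mathsf N^-$ make the two loci contribute identically --- the prefactor $\frac{(-1)^n}{2}$ being chosen precisely to absorb the doubling and normalize the sign --- the integral collapses to a single integral over $X^{[n]}\times\mathbb P$ of $\zeta^{2k-1}(1-\zeta)^{\ell+2}$ against the total Segre and Chern classes of $(H^{-1})^{[n]}\otimes\mathcal L^{-1}$.

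Finally I would perform the two integrations separately. Integration over $X^{[n]}$, of dimension $2n$, forces the degree balance $(n-i)+j=2n$, i.e. $j=n+i$, collapsing the Segre sum to the single coefficient $\alpha_i$; integration over $\mathbb P = |H|\cong\mathbb P^{\ell+1}$ extracts the coefficient of $\zeta^{\ell+1}$, which is where the binomial $\binom{\ell+2-i-2n}{\ell+2-i-2k}$ appears upon expanding $(1-\zeta)^{\ell+2-2n-i}$. Collecting the overall sign $(-1)^\ell$ and equating to the vanishing from the first evaluation yields the stated identity. I expect the main obstacle to be the careful bookkeeping of the equivariant lift of $0$ and the matching of signs and weights across $\mathsf F^+$ and $\mathsf F^-$: an error there would silently kill the relation, whereas the concluding binomial extraction is routine.
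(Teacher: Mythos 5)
Your proposal is correct and follows essentially the same route as the paper: the paper likewise defines the vanishing integrals $I_k$, lifts the class $0$ equivariantly as $c_1$ of the weight-one representation, observes that only $\mathsf F^\pm$ contribute (and identically, absorbed by the prefactor $\tfrac{(-1)^n}{2}$), and extracts the binomial coefficients by integrating $\zeta^{2k+i-1}(1-\zeta)^{\ell+2-2n-i}$ over $\mathbb P\cong\mathbb P^{\ell+1}$ after the degree balance $j=n+i$ on $X^{[n]}$. The sign bookkeeping you flag as the main risk indeed works out to the overall factor $(-1)^{\ell}$, exactly as in the paper's displayed computation.
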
 
\noindent In the statement above, we use the follow standard
conventions for the binomial:
\begin{enumerate}
\item[$\bullet$] $\binom{0}{0}=1$,
\item[$\bullet$]
$\binom{a}{b}=0$ for  $0\leq a<b$,
\item[$\bullet$]  $\binom{a}{b}=0$ for $ b< 0$, 
\item[$\bullet$] $\binom{-a}{b}=\frac{-a(-a-1)\cdots (-a-b+1)}{b!}$ for  $a>0,\, b\geq 0$. 
\end{enumerate}
Since $$c((H^{-1})^{[n]})\cdot s((H^{-1})^{[n]})=1\, ,$$ we obtain additionally $$\alpha_0+\alpha_1+\ldots+\alpha_n=0\, .$$

\section {Proof of Theorem 1} 
\label{pppp}
To prove Theorem \ref{main} for $H^{-1}$,
 we must show
$$\alpha_n={2^n}\binom{\ell-2n+2}{n}\, .$$ 
For the argument below, we will regard $\alpha_n$ as a degree $n$ polynomial in $\ell$, as shown in \cite {T}. The proof will be obtained by combining the following two statements:
 \begin {itemize}
\item [(i)] the polynomial $\alpha_n(\ell)$ has roots $\ell=2n-2,\, 2n-1,\, \ldots,\, 3n-3$,
\item [(ii)] the leading term of $\alpha_n(\ell)$ is $$\alpha_n(\ell)=\frac{2^n}{n!}\ell^{n}+ \ldots.$$
\end {itemize} 

\noindent {\it Proof of (i).} We will use Lemma \ref{identities} for the values 
\begin{equation}\label{cxxc}
\ell=2n-2, \ldots, 3n-3\, .
\end{equation} In fact, the Lemma yields trivial relations for $\ell>3n-3$. We fix a value of $\ell$ in the sequence \eqref{cxxc}, and show that for this value $$\alpha_n(\ell)=0.$$ All $\alpha$'s in the argument below will be evaluated at this fixed value of $\ell$, but for simplicity of notation, we will not indicate this explicitly.  
For convenience, write $$\ell=2n-2+p, \,\, 0\leq p\leq n-1\, ,$$ and relabel $$\beta_j=\alpha_{j+p},\,\, 1\leq j\leq n-p\, .$$ We will show that $$\beta_{n-p}=0\, .$$
Lemma \ref{identities} written for $k\mapsto n-k$ yields the relations $$\sum_{i=0}^{n} \binom{p-i}{p-i+2k}\alpha_i=0$$ for all $0\leq k\leq n-1.$ When $k=0$, the binomials corresponding to $i\leq p$ are equal to $1$, while those for $i>p$ are $0$. We obtain \begin{equation}\label{beta1}\alpha_0+\ldots+\alpha_p=0\implies \beta_1+\ldots+\beta_{n-p}=0\, .\end{equation}
When $1\leq k\leq n-1$, the binomials with $i\leq p$ are zero, so the only possible non-zero binomial numbers are obtained for $i>p.$ Writing $i=p+j$, the above identity becomes \begin{equation}\label{beta2}\sum_{j=1}^{n-p} \binom{-j}{2k-j}\beta_j=0\, .\end{equation} Note that $$\binom{-j}{2k-j}=(-1)^{j} \binom{2k-1}{j-1}\, .$$ We collect all the equations \eqref{beta1} and \eqref{beta2} for values $k\leq n-p-1$ derived above. Using the notation of the Lemma \ref{shhs}
below, the system of equations in $\beta_j's$ thus obtained can be written as $$A_{n-p} \begin{bmatrix} \beta_1\\\beta_2\\ \ldots \\ \beta_{n-p}\end{bmatrix}=0\, .$$By Lemma \ref{shhs}, $A_{n-p}$ is invertible. Hence, $$\beta_{n-p}=0$$ as claimed. 

\begin{lemma} \label{shhs} The $n\times n$ matrix $A_n$ with entries $$a_{ij}=(-1)^{j+1}\binom{2i-1}{j}\, , \ \ \ 0\leq i, j\leq n-1 $$  is invertible.  
\end{lemma}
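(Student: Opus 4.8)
The plan is to show that $A_n$ is invertible by computing its determinant, or at least showing the determinant is nonzero, via a change of basis in the space of polynomials. The key observation is that the entries $a_{ij}=(-1)^{j+1}\binom{2i-1}{j}$ encode the values of a family of polynomials at the odd integers $2i-1$. Specifically, for each fixed $j$, the quantity $\binom{x}{j}=\frac{x(x-1)\cdots(x-j+1)}{j!}$ is a polynomial in $x$ of degree exactly $j$, so as $j$ ranges over $0,1,\dots,n-1$ these polynomials form a basis of the space of polynomials of degree at most $n-1$. Evaluating this basis at the $n$ distinct points $x=2i-1$ for $i=0,1,\dots,n-1$ (namely $x=-1,1,3,\dots,2n-3$, which are genuinely distinct) produces a matrix that is column-equivalent to a Vandermonde-type matrix in those points.

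First I would make precise the factorization $A_n = V \cdot T$, where $V$ is the Vandermonde matrix with entry $(2i-1)^j$ in position $(i,j)$ and $T$ is the upper-triangular change-of-basis matrix expressing $\binom{x}{j}$ (up to the sign $(-1)^{j+1}$) in the monomial basis $1,x,\dots,x^{n-1}$. Because $\binom{x}{j}=\frac{1}{j!}x^j+(\text{lower order})$, the matrix $T$ is triangular with nonzero diagonal entries $\pm\frac{1}{j!}$, hence $\det T = \pm\prod_{j=0}^{n-1}\frac{1}{j!}\neq 0$. The Vandermonde determinant in the distinct nodes $-1,1,3,\dots,2n-3$ is $\prod_{i<i'}\big((2i'-1)-(2i-1)\big)=\prod_{i<i'}2(i'-i)\neq 0$. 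Multiplying these gives $\det A_n\neq 0$, which is exactly the claim.

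The main obstacle I anticipate is purely bookkeeping rather than conceptual: one must be careful that the row index $i$ enters only through the evaluation point $2i-1$ while the column index $j$ governs only the polynomial, so that the dependence truly separates as a product $V\cdot T$ with $T$ independent of $i$. A secondary subtlety is confirming that the $n$ evaluation points $2i-1$ are pairwise distinct over the index range $0\le i\le n-1$; this is immediate since $i\mapsto 2i-1$ is injective, giving the values $-1,1,\dots,2n-3$. Once separation and distinctness are checked, the nonvanishing of both determinant factors is routine. An alternative route, should one prefer to avoid the explicit Vandermonde bookkeeping, is to argue directly that the rows of $A_n$ are linearly independent: a linear relation $\sum_i \lambda_i a_{ij}=0$ for all $j$ would exhibit a nonzero polynomial $p(x)=\sum_i \lambda_i \binom{x}{?}$ vanishing at the $n$ distinct nodes while having degree at most $n-1$, forcing all $\lambda_i=0$. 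I expect the Vandermonde factorization to be the cleanest to write down.
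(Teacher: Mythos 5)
Your factorization argument is correct and complete: writing $a_{ij}=\sum_k (2i-1)^k\, T_{kj}$ with $T_{kj}$ the coefficient of $x^k$ in $(-1)^{j+1}\binom{x}{j}$ gives $A_n=VT$ with $T$ upper triangular, $\det T=\pm\prod_j \frac{1}{j!}\neq 0$, and $V$ the Vandermonde matrix at the distinct nodes $-1,1,\dots,2n-3$, so $\det A_n\neq 0$. The paper proves the same statement by a closely related but differently executed Vandermonde argument: it replaces the nodes $2i-1$ by indeterminates $m_i$, observes that $\det\bigl(\binom{m_i}{j}\bigr)$ vanishes whenever two $m_i$ coincide and hence is divisible by $\prod_{i<j}(m_i-m_j)$, matches degrees to conclude the determinant is a constant multiple of that product, and pins down the constant by specializing to $m_i=i$, where the matrix is unitriangular. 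Your version is more explicit (it yields the exact value $\det A_n=\pm\prod_j\frac{1}{j!}\cdot\prod_{i<i'}2(i'-i)$ with no need for the degree count or the auxiliary specialization), while the paper's version is marginally more flexible in that it proves invertibility for any distinct integer nodes at once; either is a perfectly acceptable proof. One small caveat: your sketched ``alternative route'' at the end is garbled --- a relation among the \emph{rows} does not produce a single polynomial vanishing at the nodes; the clean dual statement is that a relation among the \emph{columns}, $\sum_j\mu_j a_{ij}=0$ for all $i$, yields a degree $\leq n-1$ polynomial $\sum_j \mu_j(-1)^{j+1}\binom{x}{j}$ vanishing at $n$ distinct points, forcing all $\mu_j=0$. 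Since that passage is only an aside and your main argument stands on its own, this does not affect the validity of the proof.
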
 
\proof 
Let $A$ be the $n\times n$ matrix with entries 
$$a_{ij}=\binom{m_i}{j}\, , \ \ \ 0\leq i, j\leq n-1\, . $$ 
If the $m_i$ are distinct, we will prove the  invertibility of $A$  via a
standard Vandermonde argument.  Since $A_n$  is obtained (up to the signs
$(-1)^{j+1}$ which only affect the sign of the determinant)
by the specialization
$$m_i=2i-1\, ,$$ the
Lemma follows.

If two $m$'s are equal, then the matrix $A$ has two equal rows, so 
$\det A$ vanishes. Therefore, $\det A$ is divisible by 
$$\prod_{i< j} (m_i-m_j)\, .$$ Since $A$ has degree $\frac{n(n-1)}{2}$ in the $m$'s, it suffices to show that the leading term is non-zero. If the leading term were zero, then $\det A=0$ for all choices of $m_i$. However, for $m_i=i$ the matrix $A$ is triangular with $1$'s on the diagonal.  
\qed

\vskip.1in
 
\noindent {\it Proof of (ii).} For the series $A_1(z)$ in \eqref{egleq}, we write $$A_1(z) = a_{11} z + a_{12} z^2 + a_{13} z^3 + \cdots\, .$$ By evaluating the coefficient of $z$ in \eqref {egleq}, we obtain
$$2\ell \,a_{11} = \int_X s_2 (H) = 2\ell \implies a_{11} =1\, .$$ 
The coefficient of $z^n$ in \eqref{egleq} yields 
\begin{eqnarray*}
N_{X,H,n} &=& \frac{(2\ell\,a_{11})^{n}}{n!} + \, \, \text{lower order terms in} \, \,\ell\\ & =& \frac{2^n}{n!}\, \ell^n + \ldots\, .
\end{eqnarray*} 

\vspace{8pt}
Statements (i) and (ii) imply Theorem 1 for primitively 
polarized $K3$ surfaces $(X,H)$ of degree $2\ell$ and
Picard rank $1$. By \eqref{egleq}, Theorem 1 then holds for
all pairs $(X,H)$ where $X$ is a nonsingular projective
$K3$ surface. \qed

\begin{remark} It is natural to consider the remaining integrals $$\int_{X^{[n]}} c_{n-i} (H^{[n]})\cdot s_{n+i}(H^{[n]}).$$ The case $i=n$ was the subject of Lehn's conjecture. On the other hand, $i=0$ corresponds to the curve calculations of \cite{Cott, LeB, Wang}:  
$$\int_{X^{[n]}} c_{n} (H^{[n]})\cdot s_{n}(H^{[n]})=\int_{C^{[n]}}s_n((H|_{C})^{[n]})=(-4)^n\binom{\frac{\ell}{2}}{n},$$ where $C$ is a smooth curve in the linear series $|H|$. The intermediate cases are unknown. However, when $i=n-1$, the methods of this work show that $$\int_{X^{[n]}} c_{1} (H^{[n]})\cdot s_{2n-1}(H^{[n]})=-2^{n}\binom{\ell-2n+2}{n-1} \cdot \left(\ell-\frac{3n-3}{2}\right).$$ Crucially, the proof of (i) above also establishes that $\alpha_{n-1}$, viewed as a degree $n$ polynomial in $\ell$, has $n-1$ roots at $$\ell=2n-2, 2n-3, \ldots, 3n-4.$$ Therefore, $\alpha_{n-1}$ is determined up to an ambiguity of two coefficients. These can be read off from the asymptotics $$\alpha_{n-1}=-\frac{1}{(n-1)!} \cdot (2\ell)^{n}+\frac{(5n-3)}{(n-2)!}\cdot (2\ell)^{n-1} +\ldots.$$ To prove this fact, it is more useful to consider the full Chern and Segre polynomials in the variables $x$ and $y$ $$c_x=1+c_1x+c_2x^2+\ldots,\,\,\, s_y=1+s_1y+s_2y^2+\ldots.$$ In the $K3$ case, we derive using \cite {EGL} the exponential form $$\sum_{n=0}^{\infty} z^{n}\int_{X^{[n]}} c_{x} (H^{[n]})\cdot s_{y}(H^{[n]})=\exp (2\ell\cdot A(z)+B(z))$$ for power series $$A(z)=a_{1}z+a_{2} z^2+\ldots, \,B(z)=b_1z+b_2z^2+\ldots$$ whose coefficients are polynomials in $x$ and $y$. Evaluating the coefficient of $z^n$, we obtain the asymptotics $$\int_{X^{[n]}} c_{x} (H^{[n]})\cdot s_{y}(H^{[n]})=\frac{a_1^n}{n!}\cdot (2\ell)^{n}+\left(\frac{a_1^{n-1}b_1}{(n-1)!}+\frac{a_1^{n-2}a_2}{(n-2)!}\right)\cdot (2\ell)^{n-1}+\ldots.$$ When $n=1$ this becomes $$\int_{X} c_x(H) \cdot s_y(H)=2y(y-x)\cdot \ell\implies a_1=y(y-x), \,\,\, b_1=0,$$ while for $n=2$, we find $$a_2=y^2(y-x)(2x-5y).$$ Therefore, 
$$\int_{X^{[n]}} c_{x} (H^{[n]})\cdot s_{y}(H^{[n]})=\frac{y^n(y-x)^n}{n!}\cdot(2\ell)^{n}+\frac{y^n (y-x)^{n-1}(2x-5y)}{(n-2)!} \cdot (2\ell)^{n-1}+\ldots.$$ Finally, we isolate the integral $\alpha_{n-1}$ by considering the coefficient of $xy^{2n-1}.$ 

\end{remark}
\section {Lehn's conjecture for $K$-trivial surfaces} \label{LLL}
We verify here several statements made in Section \ref{iiii} concerning
Lehn's conjecture:
\begin{itemize}
\item [(i)] Theorem \ref{main} is equivalent to Conjecture 1
for $K3$ surfaces.
\item [(ii)] Conjecture 1  for $K3$ sufaces
implies equations \eqref{a1} and \eqref{a4}.
\item [(iii)] Corollaries \ref{maincor} and \ref{maincor2} hold. 
\end{itemize}
{\it Proof of (i).} Let $(X,H)$ be a $K3$ surface
of degree $2\ell$.
We must prove $$\sum_{n=0}^{\infty} N_{X,H,n}\, z^n=\frac{(1-2w)^{2\ell+6}}{(1-6w+6w^2)^{\ell+2}}$$ 
for $N_{X,H,n}=2^{n}\binom{\ell-2n+2}{n}$ given by Theorem \ref{main}
and the change of variables
$$z=\frac{w(1-w)(1-2w)^4}{(1-6w+6w^2)^3}\, .$$
 The following Lemma applied to $$x=2z,\, \ \ y=2(w-w^2)$$ completes the proof of (i).

\begin{lemma}\label{ident} After the
change of variable 
$$x=\frac{y(1-2y)^2}{(1-3y)^3}\, ,$$ we have $$\sum_{n=0}^{\infty} x^n \binom{\ell-2n+2}{n} = \frac{(1-2y)^{\ell+3}}{(1-3y)^{\ell+2}}\, .$$
\end{lemma}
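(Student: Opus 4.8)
The statement is a generating-function identity relating a sum of binomial coefficients to an explicit algebraic function under a Lagrange-type substitution. My plan is to prove it by the Lagrange inversion formula, which is the natural tool whenever the independent variable $x$ is expressed as $y$ times an invertible power series in $y$. Here $x = y(1-2y)^2/(1-3y)^3 = y\,\phi(y)$ with $\phi(y) = (1-2y)^2/(1-3y)^3$ and $\phi(0)=1$, so the substitution is invertible as a formal power series and Lagrange inversion applies directly. The idea is to extract the coefficient of $x^n$ on both sides and check equality.

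First I would set $F(x) = \sum_{n\ge 0} x^n \binom{\ell-2n+2}{n}$ and compute $[x^n]F = \binom{\ell-2n+2}{n}$ from the definition. On the right-hand side I would expand $G(y) = (1-2y)^{\ell+3}/(1-3y)^{\ell+2}$ and use the Lagrange--B\"urmann formula in the form
\begin{equation*}
[x^n]\,G(y(x)) = \frac{1}{n}\,[y^{n-1}]\left( G'(y)\,\phi(y)^{-n}\right) = \frac{1}{n}\,[y^{n-1}]\left(G'(y)\,\frac{(1-3y)^{3n}}{(1-2y)^{2n}}\right),
\end{equation*}
valid for $n\ge 1$, together with the $n=0$ check $G(0)=1=\binom{\ell+2}{0}$. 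Here $G'(y)$ is an elementary derivative of a product of powers of $(1-2y)$ and $(1-3y)$, so $G'(y)\,\phi(y)^{-n}$ simplifies to a single expression of the shape $(1-3y)^{a}(1-2y)^{b}$ times a linear factor in $y$, with exponents $a,b$ linear in $n$ and $\ell$. The coefficient of $y^{n-1}$ in such a product is then a finite sum of products of two binomial coefficients coming from the two generalized binomial expansions.

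The main computational step is to reduce that double binomial sum to the single binomial $\binom{\ell-2n+2}{n}$. I expect this to be the crux of the argument: after the Lagrange extraction one is left with a Vandermonde-type convolution of binomials involving the parameter $\ell$, and the task is to collapse it. The cleanest route is probably to organize the sum so that the Chu--Vandermonde identity (or a single application of the binomial theorem after recognizing a telescoping structure in the linear factor from $G'$) applies; the factor of $1/n$ from Lagrange inversion should cancel against a factor of $n$ produced when the linear term in $y$ is combined with the binomial indices. As an alternative that sidesteps the messiest algebra, one can instead verify the identity by checking that both sides satisfy the same first-order algebraic relation: differentiating $G(y)$ logarithmically gives $G'/G = (\ell+2)\cdot 3/(1-3y) - (\ell+3)\cdot 2/(1-2y)$, and one can seek a differential equation in $x$ satisfied by $F(x)$, matching initial conditions.

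I would carry out the steps in this order: (1) confirm $\phi(0)=1$ and the $n=0$ base case; (2) apply Lagrange inversion to express $[x^n]$ of the right-hand side as a residue in $y$; (3) simplify $G'(y)\phi(y)^{-n}$ to a product of powers of $(1-2y)$ and $(1-3y)$ with a linear prefactor; (4) extract $[y^{n-1}]$ as a finite binomial convolution; and (5) collapse the convolution to $\binom{\ell-2n+2}{n}$ via Chu--Vandermonde. Step (5) is the hard part, and if the direct convolution proves unwieldy I would fall back on the differential-equation comparison described above, since checking that two power series satisfy a common first-order ODE with matching constant terms is a purely mechanical verification once both ODEs are in hand.
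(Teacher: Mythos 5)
Your instinct that Lagrange inversion is the engine behind this identity is correct---the paper itself invokes it, in the guise of the Schur--Jabotinsky theorem---but your plan has a genuine gap exactly at the step you flag as ``the hard part,'' and neither of your proposed ways of closing it works as stated. Carrying out your steps (2)--(4): writing $y=x\,(1-3y)^3/(1-2y)^2$, one computes $G'(y)=(\ell+6y)(1-2y)^{\ell+2}(1-3y)^{-\ell-3}$, hence
$$[x^n]\,G(y(x))=\frac{1}{n}\,[y^{n-1}]\Bigl((\ell+6y)\,(1-2y)^{\ell+2-2n}(1-3y)^{3n-\ell-3}\Bigr).$$
The resulting coefficient is a convolution $\sum_k \binom{\ell+2-2n}{k}(-2)^k\binom{3n-\ell-3}{n-1-k}(-3)^{n-1-k}$ (plus a shifted copy coming from the $6y$ term). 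Because the two factors are powers of \emph{different} linear forms, this is a terminating ${}_2F_1$ evaluated at $2/3$, not a Vandermonde convolution; Chu--Vandermonde does not apply, and you have not identified the mechanism that collapses it to the single binomial $\binom{\ell-2n+2}{n}$. The fallback is also problematic: the series $F(x)=\sum_n\binom{\ell-2n+2}{n}x^n$ has coefficient ratio $a_{n+1}/a_n$ equal to a ratio of cubics in $n$, so the differential equation it manifestly satisfies is third order with polynomial coefficients. Verifying that $F$ satisfies the \emph{first}-order equation $F'/F=G'(y)/\bigl(G(y)\,x'(y)\bigr)$, whose coefficients are algebraic functions of $x$, amounts to re-proving the coefficient identity you started with, so the ``purely mechanical verification'' is circular rather than a genuine shortcut.

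The paper closes this gap with a different decomposition. Pascal's identity gives the three-term recursion $f_{\ell+1}(x)=f_{\ell}(x)+x f_{\ell-2}(x)$ in the parameter $\ell$; after the further substitution $(1-2y)/(1-3y)=1+t$, so that $x=t(1+t)^2$, the right-hand side $(1+t)^{\ell+3}/(1+3t)$ visibly satisfies the same recursion; and Lagrange inversion is then needed only for the three base cases $\ell=-4,-3,-2$. At $\ell=-4$ the right-hand side is exactly $dt/dx$, so the coefficient extraction reduces to $[t^{-1}]\,t^{-n}(1+t)^{-2n}=\binom{-2n}{n-1}$ with no convolution at all, and the other two base cases follow by elementary manipulations. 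If you want to salvage a direct one-shot Lagrange-inversion proof, you would need an actual evaluation of the ${}_2F_1(2/3)$ above (note the special structure $(\ell+2-2n)+(3n-\ell-3)=n-1$, which is what makes it evaluable); otherwise you should adopt an induction on $\ell$ along the paper's lines.
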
 
\begin{proof} We carry out a further change of variables $$\frac{1-2y}{1-3y}=1+t\implies x=t(1+t)^2.$$ We must
 show 
$$\sum_{n=0}^{\infty} x^n \binom{\ell-2n+2}{n}=\frac{(1+t)^{\ell+3}}{1+3t}\, .$$ Denote the left side by $f_{\ell}(x)$. Pascal's identity shows the recursion \begin{equation}\label{recursion}f_{\ell+1}(x)=f_{\ell}(x)+xf_{\ell-2}(x).\end{equation}
The corresponding recursion for the right side  $\frac{(1+t)^{\ell+3}}{1+3t}$
is easily verified.
We prove the Lemma by induction on $\ell$. 

We first verify the base case consisting of three consecutive integers, for instance $$\ell=-4, -3, -2\, .$$ When $\ell=-4$, we must show  $$\sum_{n=0}^{\infty} x^n \binom{-2n-2}{n}=\frac{1}{(1+t)(1+3t)}=\left(\frac{dx}{dt}\right)^{-1}=\frac{dt}{dx}$$ which 
may be rewritten as \begin{equation}\label{teq}t=\sum_{n=0}^{\infty} \frac{x^{n+1}}{n+1} \binom{-2n-2}{n}\, ,\end{equation} where $t=g(x)$ is the solution of the equation $$t(1+t)^2=x\, ,$$ 
valid in a neighborhood of $x=0$ and with $g(0)=0$. 
The Taylor expansion \eqref{teq} 
for $g$ is equivalent to the value of the $n^{\text{th}}$-derivative $$\frac{d^n g}{dx^n}(0)=(n-1)!\binom{-2n}{n-1}\, .$$ The  $n^{\text{th}}$-derivative of the inverse function $g$ at $0$ is determined
 by the Schur-Jabotinsky Theorem \cite {J}: $g^{(n)}(0)$ equals $$(n-1)!\cdot \text{ coefficient of } t^{-1} \text { in the expansion of }x^{-n}.$$ Since $$x^{-n}=t^{-n} (1+t)^{-2n}\, ,$$ the above coefficient has value $\binom{-2n}{n-1}$ as
required. 

Using the identity $$\frac{1+t}{1+3t}=(1+t)-\frac{3}{(1+t)(1+3t)}\cdot t(1+t)^2=(1+t)-3f_{-4}(x)\cdot x$$ and equation \eqref{teq}, we obtain 
\begin{eqnarray*}
\frac{1+t}{1+3t}&=&1+ \sum_{n=0}^{\infty} \frac{x^{n+1}}{n+1} 
\binom{-2n-2}{n}-3\sum_{n=0}^{\infty} x^{n+1} \binom{-2n-2}{n}\\
&=&1+\sum_{n=0}^\infty x^{n+1} \binom{-2n-2}{n+1}\\
&=&f_{-2}(x)\, ,
\end{eqnarray*}
 which verifies the Lemma for $\ell=-2$. 

For $n\geq 1$, we have the identity
 $$\binom{-2n-1}{n}=\frac{3}{2}\binom{-2n}{n}\, .$$ 
Taking into account corrections coming from $n=0$, we obtain 
$$\sum_{n=0}^{\infty} x^{n} \binom{-2n-1}{n}=\frac{3}{2} \sum_{n=0}^{\infty} x^{n} \binom{-2n}{n} -\frac{1}{2}\, .$$ Equivalently, $$f_{-3}(x)\, =\, \frac{3}{2} f_{-2}(x)-\frac{1}{2}\, =\, \frac{3}{2} \cdot \frac{1+t}{1+3t}-\frac{1}{2}\, =\, \frac{1}{1+3t}\, ,$$
which verifies the remaining base case.
\end{proof}

\vspace{8pt}
\noindent {\it Proof of (ii).} We combine equations \eqref{egleq} and \eqref{series} for
a $K3$ surface $(X,H)$ of degree $2\ell$
 to obtain $$\exp 
\left(2\ell A_1(z) +24 A_4(z)\right)=
\frac{(1-2w)^{2\ell+6}}{(1-6w+6w^2)^{\ell+2}}\, .$$ 
Setting $\ell=0$ yields $$\exp (24 A_4(z))=\frac{(1-2w)^6}{(1-6w+6w^2)^2}$$ and thus $$\exp(2\ell A_1(z))=\frac{(1-2w)^{2\ell}}{(1-6w+6w^2)^{\ell}}\implies A_1(z)=\frac{1}{2} \log \left(\frac{1-4w+4w^2}{1-6w+6w^2}\right).$$ Setting as before $$z=\frac{1}{2}t(1+t)^2\implies \frac{1-4w+4w^2}{1-6w+6w^2}=1+t,$$ we obtain equation \eqref{a1}. Equation \eqref{a4} follows by the same method. \vskip.1in

\vspace{8pt}
\noindent {\it Proof of (iii).} Let $(A,H)$ be an
abelian or bielliptic surface of degree $2\ell$.
Equation \eqref{egleq} and the proof of (ii) yield
 $$\sum_{n=0}^{\infty} N_{A,H,n}\, z^n=\exp (2 \ell A_1(z))=\frac{(1-2w)^{2\ell}}{(1-6w+6w^2)^{\ell}}\, .$$ Corollary \ref{maincor} is equivalent to the identity $$\sum_{n=0}^{\infty} z^n \frac{2^n\ell}{n} \binom{\ell-2n-1}{n-1}=\frac{(1-2w)^{2\ell}}{(1-6w+6w^2)^{\ell}}.$$ 
After the change of variables $2z=x$ and $2(w-w^2)=y$, we obtain
 \begin{equation}\label{abelian}\sum_{n=0}^{\infty} x^{n} \frac{\ell}{n} \binom{\ell-2n-1}{n-1}=\left(\frac{1-2y}{1-3y}\right)^{\ell}\end{equation} in the notation of Lemma \ref{ident}. 
We have $$\frac{dy}{dx}=\left(\frac{dx}{dy}\right)^{-1}=\frac{(1-3y)^4}{1-2y}\implies d\left(\frac{1-2y}{1-3y}\right)=\frac{(1-3y)^2}{1-2y}dx\, .$$ Writing Lemma \ref{ident} for $\ell\mapsto \ell-5$, we obtain $$\sum_{n=0}^{\infty} x^{n} \binom{\ell-2n-3}{n}=\frac{(1-2y)^{\ell-2}}{(1-3y)^{\ell-3}}\, $$ 
and therefore also $$\sum_{n=0}^{\infty} x^{n} \binom{\ell-2n-3}{n} dx=\frac{(1-2y)^{\ell-1}}{(1-3y)^{\ell-1}} d\left(\frac{1-2y}{1-3y}\right).$$ 
After integration, we obtain
 $$\sum_{n=0}^{\infty} \frac{x^{n+1}}{n+1} \binom{\ell-2n-3}{n} = \frac{1}{\ell} \frac{(1-2y)^{\ell}}{(1-3y)^{\ell}}-\frac{1}{\ell}$$ which yields \eqref{abelian} after the shift $n\to n-1$. This proves Corollary \ref{maincor}. 

\vspace{8pt}
Let $(E,H)$ be an
Enriques surface of degree $2\ell$.
Equation \eqref{egleq} yields
 $$\sum_{n=0}^{\infty} N_{E,H,n}\, z^n=\exp(2\ell A_1(z)+12A_4(z))=\frac{(1-2w)^{2\ell+3}}{(1-6w+6w^2)^{\ell+1}}\, .$$ 
Using the change of variables  $2z=x$ and $2(w-w^2)=y$
as above, we obtain $$\sum_{n=0}^{\infty} \frac{N_{E,H,n}}{2^n} x^n=\frac{(1-2y)^{\ell+\frac{3}{2}}}{(1-3y)^{\ell+1}}\, .$$ In particular, by Lemma \ref{ident},
 \begin{equation}\label{enriques}\left(\sum_{n=0}^{\infty} 
\frac{N_{E,H,n}}{2^n} x^n\right)^2=\sum_{n=0} x^n \binom{2\ell-2n+2}{n}=\sum_{n=0}^{\infty} \frac{N_{X,H,n}}{2^n} x^n,\end{equation} where the $(X,H)$
is the $K3$ double covering $E$ with the divisor class
determined by pull-back.
The proof of Corollary \ref{maincor2} is complete.

\begin{corollary} If $Y$ is a minimal elliptic surface and $H=mf$ is a multiple of the fiber class, then
$$\sum_{n} \frac{N_{Y,H,n}}{2^n} x^n=\left(\sum_{n=0}^{\infty} x^n \binom{-2n+2}{n}\right)^{\chi(\mathcal O_Y)/2}\, .$$
\end{corollary}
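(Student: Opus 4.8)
The plan is to reduce the final Corollary to the Enriques case (Corollary~\ref{maincor2}) and Lehn's evaluation of $A_4$, by exploiting the fact that for a minimal elliptic surface $Y$ with $H=mf$ a multiple of the fiber class, the relevant intersection numbers $H^2$, $H\cdot K_Y$, and $K_Y^2$ all vanish. Indeed, since $f^2=0$ and the canonical bundle of a minimal elliptic surface is itself a rational multiple of the fiber (the canonical bundle formula $K_Y = \pi^*(\text{something}) + \sum (m_i-1)F_i$ gives $K_Y$ numerically supported on fiber classes), we get $H^2 = m^2 f^2 = 0$, $H\cdot K_Y = 0$, and $K_Y^2 = 0$. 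Thus in the universal formula \eqref{egleq} only the series $A_4$ survives, weighted by $c_2(Y)$.

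First I would record that $c_2(Y) = 12\,\chi(\mathcal O_Y)$ by Noether's formula, using $K_Y^2 = 0$. Consequently \eqref{egleq} collapses to
$$
\sum_{n=0}^{\infty} N_{Y,H,n}\, z^n = \exp\big(c_2(Y)\cdot A_4(z)\big) = \exp\big(12\,\chi(\mathcal O_Y)\, A_4(z)\big).
$$
Next I would invoke the evaluation of $A_4$ obtained in the proof of part (ii) above, namely $\exp(24\,A_4(z)) = \frac{(1-2w)^6}{(1-6w+6w^2)^2}$, which gives $\exp(12\,A_4(z)) = \frac{(1-2w)^3}{(1-6w+6w^2)}$ upon taking the square root (the branch fixed by $A_4(0)=0$). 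Raising to the power $\chi(\mathcal O_Y)$ then yields
$$
\sum_{n=0}^{\infty} N_{Y,H,n}\, z^n = \left(\frac{(1-2w)^{3}}{1-6w+6w^2}\right)^{\chi(\mathcal O_Y)}.
$$

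Finally I would translate this into the variable $x$ via the substitution $2z = x$ and $2(w-w^2)=y$ used throughout Section~\ref{LLL}. Dividing by the appropriate power of $2$, the left side becomes $\sum_n \frac{N_{Y,H,n}}{2^n}x^n$. On the right, the base of the exponent matches the $\ell=0$ specialization of Lemma~\ref{ident}: setting $\ell=0$ there gives $\sum_{n=0}^\infty x^n\binom{-2n+2}{n} = \frac{(1-2y)^{3}}{(1-3y)^{2}}$, which is precisely $\frac{(1-2w)^3}{1-6w+6w^2}$ after the change of variables, so that
$$
\sum_{n}\frac{N_{Y,H,n}}{2^n}x^n = \left(\sum_{n=0}^{\infty} x^n\binom{-2n+2}{n}\right)^{\chi(\mathcal O_Y)}.
$$
Extracting the square root to pass from the exponent $\chi(\mathcal O_Y)$ appearing with $A_4$ to the stated exponent $\chi(\mathcal O_Y)/2$ is the one delicate point: I expect the main obstacle to be bookkeeping the factor of two correctly, ensuring the square-root branch is the one vanishing at the origin and that the power-series identity is consistent with the Enriques computation \eqref{enriques}, where the same halving of the exponent occurs. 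Once the normalization is pinned down, the result follows immediately by comparison with Lemma~\ref{ident}.
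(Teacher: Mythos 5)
Your route is the paper's own: for $H=mf$ on a minimal elliptic surface the intersection numbers $H^2$, $H\cdot K_Y$, $K_Y^2$ all vanish, Noether's formula gives $c_2(Y)=12\chi(\mathcal O_Y)$, so \eqref{egleq} collapses to
$$\sum_n N_{Y,H,n}\,z^n=\exp\bigl(12\chi(\mathcal O_Y)A_4(z)\bigr)=\Bigl(\tfrac{(1-2w)^{3}}{1-6w+6w^2}\Bigr)^{\chi(\mathcal O_Y)},$$
and one finishes with the substitution $2z=x$, $2(w-w^2)=y$ and Lemma~\ref{ident}. Everything up to the last conversion is correct.

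The gap is in that conversion. You assert that the $\ell=0$ case of Lemma~\ref{ident}, namely $\sum_n x^n\binom{-2n+2}{n}=\tfrac{(1-2y)^{3}}{(1-3y)^{2}}$, ``is precisely $\tfrac{(1-2w)^3}{1-6w+6w^2}$ after the change of variables.'' It is not: since $1-2y=1-4w+4w^2=(1-2w)^2$ and $1-3y=1-6w+6w^2$, one has $\tfrac{(1-2y)^3}{(1-3y)^2}=\tfrac{(1-2w)^6}{(1-6w+6w^2)^2}$, the \emph{square} of the base you need. This misidentification is what produces your intermediate claim $\sum_n\tfrac{N_{Y,H,n}}{2^n}x^n=\bigl(\sum_n x^n\binom{-2n+2}{n}\bigr)^{\chi(\mathcal O_Y)}$, which is false (it is the square of the true answer), and no subsequent ``square-root extraction'' can repair a wrong equality of power series. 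The correct computation is
$$\frac{(1-2w)^3}{1-6w+6w^2}=\frac{(1-2y)^{3/2}}{1-3y}=\Bigl(\frac{(1-2y)^3}{(1-3y)^2}\Bigr)^{1/2}=\Bigl(\sum_{n} x^n\binom{-2n+2}{n}\Bigr)^{1/2},$$
with the branch equal to $1$ at the origin, so the exponent $\chi(\mathcal O_Y)/2$ appears directly and there is nothing left to reconcile. This is the same half-integer-power phenomenon already present in the Enriques identity \eqref{enriques}, where the odd power $(1-2w)^{2\ell+3}$ forces one to square before applying Lemma~\ref{ident}. With that one line corrected, your argument coincides with the paper's proof.
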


\begin{proof}
Equation \eqref{egleq} yields
$$\sum_{n} N_{Y,H,n}\, z^n=\exp(12\chi(\mathcal O_Y) \cdot A_4(z))=\left(\frac{(1-2w)^{3}}{1-6w+6w^2}\right)^{\chi(\mathcal O_Y)}\, .$$ 
The Corollary follows
via the change of variables $2z=x$ and $2(w-w^2)=y$
and Lemma \ref{ident}.
 \end{proof}

\section{Moduli of $K3$ surfaces} \label{MMM}

\subsection{Tautological classes}
Let $\mathcal{M}_{2\ell}$ be the moduli space of quasi-polarized
$K3$ surfaces $(X,H)$ of degree $2\ell$. The Noether-Lefschetz loci
define
classes in the Chow ring $\AA(\mathcal{M}_{2\ell},\mathbb{Q})$. Let
$$\NL(\mathcal{M}_{2\ell}) \subset \AA(\mathcal{M}_{2\ell},\mathbb{Q})$$
be the subalgebra generated by Noether-Lefschetz loci (of all codimensions).
A basic result conjectured in  \cite{MP} and proven in \cite{Ber} is the
isomorphism
$$\mathsf{NL}^1(\mathcal{M}_{2\ell}) = \mathsf{A}^1(\mathcal{M}_{2\ell},\mathbb{Q})\, .$$

Another method of constructing classes in $\AA(\mathcal{M}_{2\ell},\mathbb{Q})$
is the following.
Let
$$ \pi: \mathcal{X} \rightarrow \mathcal{M}_{2\ell}$$
be the universal surface. Let 
$$\mathcal{H} \rightarrow \mathcal{X} \ \ \ \text{and} \ \ \
\mathcal{T}_\pi\rightarrow \mathcal{X} 
$$
the universal quasi-polarization (canonical up to a twist
by an element of $\text{Pic}(\mathcal{M}_{2\ell})$)
and the relative tangent bundle.
The $\kappa$ classes are defined by
$$\kappa_{a,b}  = \pi_*\left(c_1(\mathcal{H})^a \cdot 
c_2(\mathcal{T}_\pi)^b\right) 
\ \in \mathsf{A}^{a+2b-2}(\mathcal{M}_{2\ell},\mathbb{Q})\ .$$

There is no need to include a $\kappa$ index for the
first Chern class of $\mathcal{T}_\pi$ since
$$c_1(\mathcal{T}_\pi) = -\pi^*\lambda$$ where $\lambda=c_1(\mathbb E)$ is the first Chern class of the Hodge line bundle
$$\mathbb{E} \rightarrow \mathcal{M}_{2\ell}$$
with fiber $H^0(X,K_X)$ over the moduli
point $(X,H)\in \mathcal{M}_{2\ell}$. The Hodge class $\lambda$ is known to be supported on Noether-Lefschetz divisors.{\footnote{In \cite {MP},
$\lambda$ is considered a degenerate Noether-Lefschetz divisor. See
\cite[Theorem 3.1]{DM} to express $\lambda$ in terms of 
proper Noether-Lefschetz divisors.}} 
The ring generated by $\lambda$ has been determined in \cite{kvg}. 

On the Noether-Lefschetz locus $\mathcal{M}_{\Lambda}\subset \mathcal{M}_{2\ell}$
 corresponding to 
the Picard lattice $\Lambda$, richer $\kappa$
classes may be defined
 using {\it all} the powers of {\it all} the universal line bundles
associated to $\Lambda$. If $H_1,\ldots,H_r$ is a
basis of $\Lambda$ and 
$$\mathcal{H}_1,\ldots,\mathcal{H}_r \rightarrow \mathcal{X}_\Lambda$$
are the associated universal bundles 
(canonical up to a twist
by an element of $\text{Pic}(\mathcal{M}_{\Lambda})$)
over the universal
surface
$$\mathcal{X}_\Lambda \rightarrow \mathcal{M}_{\Lambda}\, ,$$
we define the $\kappa^\Lambda$ classes by
$$\kappa_{a_1,\ldots,a_r,b}  = 
\pi_*\left(c_1(\mathcal{H}_1)^{a_1}\cdots
c_1(\mathcal{H}_r)^{a_r}
 \cdot 
c_2(\mathcal{T}_\pi)^b\right) 
\ \in \mathsf{A}^{\sum_i a_i+2b-2}(\mathcal{M}_{\Lambda})\ .$$

We define the {\it tautological ring} of the moduli space
of $K3$ surfaces,
$$\RR(\mathcal{M}_{2\ell}) \subset \AA(\mathcal{M}_{2\ell},\mathbb{Q})\, ,$$
to be the subring generated by the push-forwards from
the Noether-Lefschetz loci of 
all monomials in
the $\kappa$ classes. By definition, 
$$\NL(\mathcal{M}_{2\ell}) \subset \RR(\mathcal{M}_{2\ell})\, .$$

\subsection{Relations}
Consider the universal surface
$$ \pi:\mathcal{X} \rightarrow \mathcal{M}_{2\ell},$$
and the $\pi$-relative Quot scheme $\prankpi$ parametrizing 
quotients
 $$\mathbb C^r \otimes \mathcal O_X \to F\to 0$$  
where $F$ is a rank 0 coherent sheaf on $(X,H)$
satisfying 
$$c_1(F)=dH\ \ \ \text{and} \ \ \ \chi(F)=\chi\, .$$

Just as in Section \ref{alice},
the $\pi$-relative
Quot scheme admits a reduced virtual class
via the $\pi$-relative obstruction theory. The push-forwards 
\begin{equation}\label{rrr}
\pi_*\left( \gamma\cdot 0^{k} \, \cap \,
{\left[\prankpi\right]^{\text{red}}}
\right) \, \in\,  \AA(\mathcal{M}_{2\ell},\mathbb{Q}) 
\end{equation}
 vanish for all $k> 0$ and for all choices of Chow classes $\gamma$.
There is a natural torus action on $\prankpi$
via the torus action on $\mathbb{C}^r$. 
Virtual localization \cite{GP} applied to  the push-forwards \eqref{rrr}
yields relations in $\RR(\mathcal{M}_{2\ell})$. The
Noether-Lefschetz loci appear naturally when the curve class
$dH$ splits into non-multiples of $H$.

There is no difficulty in writing the resulting relations
in $\RR(\mathcal{M}_{2\ell})$ in terms of push-forwards from
the $\pi$-relative Hilbert schemes of points.
To show the relations are {\it non-trivial} requires
non-vanishing results in the intersection theory
of the $\pi$-relative Hilbert scheme. 
A thorougher study of the relations in $\RR(\mathcal{M}_{2\ell})$ 
obtained by localizing the virtual class of $\pi$-relative Quot schemes
will appear elsewhere.  

\subsection{Virtual localization}\label{s4} We illustrate the program proposed above with several examples. For simplicity, we will only consider the case $d=1$. The relative Quot scheme $$p:\pranktwopi\to \mathcal M_{2\ell}$$ has relative reduced virtual dimension equal to $2n+1$
 where 
$$n=\chi+\ell\, . $$
As before, there is a  support morphism $$\pranktwopi \to \mathbb P(\pi_{\star} \mathcal H), \ \ \ \left[0\to E\to \mathbb C^2\otimes \mathcal O_X\to F\to 0\right]\mapsto \text{supp } F.$$ Consider the codimension $1$ vanishing push-forwards $$\frac{(-1)^n}{2}\cdot p_{\star} \left(0^{2n+2-2k}\cdot \zeta^{2k}\cap \left[\pranktwopi\right]^{\text{red}} \right)\in \mathsf A^{1}(\mathcal M_{2\ell}, \mathbb Q),$$ for $0\leq k\leq n.$ We  evaluate these expressions via equivariant localization
for the torus action induced from the splitting $$\mathbb C^2=\mathbb C[0]+ \mathbb C[1].$$ 
 
Over the generic point $(X, H)$ of the moduli space, the fixed loci over the Quot scheme correspond to kernels of the form $$E=I_Z + I_W\otimes \mathcal O_X(-C)$$ where $$\ell(Z)=z,\,\, \ell(W)=w,\,\, z+w=n$$ and $C$ is a curve in the linear series $|H|$. The weights are distributed in two possible ways over the summands. 

There are additional fixed loci over surfaces in the Noether-Lefschetz divisors corresponding to a nontrivial-splitting $[H]=[C_1]+[C_2]$ so that $$E=I_Z\otimes \mathcal O_X(-C_1)+I_W\otimes \mathcal O_X(-C_2).$$  
The curve classes $C_1, C_2$ are effective, so they intersect $H$ nonegatively.  Since $$C_1\cdot H+C_2\cdot H=2\ell, \,\,\,\ell(Z)+\ell(W)=n-C_1\cdot C_2,$$ there are finitely many such Noether-Lefschetz divisors and fixed loci over them. 
We will address them later.

For now, to describe the {\it generic} fixed loci and their contributions, we introduce the following notation: \begin{itemize}
\item [--] Over the moduli space $\mathcal M_{2\ell}$, the bundle $$\mathbb V=\pi_{\star} \mathcal H$$ has rank $\ell+2$. We write 
$$\mathbb P=\mathbb P(\mathbb V)\ \  \text{ and } \ \ \mathcal L=\mathcal O_{\mathbb P}(1)$$ for the associated relative linear series $|\mathcal H|$ over the moduli space and the corresponding tautological line bundle  respectively.

\vspace{8pt}
\item [--] Over the relative Hilbert scheme $\mathcal X^{[z]}\to \mathcal M_{2\ell}$, we introduce the rank $z$ bundle $$\mathcal H^{[z]}=\text{pr}_{2\star} (\text{pr}_1^{\star}{\mathcal H}\otimes \mathcal O_{\mathcal Z}),$$ obtained as the push forward from the universal surface of the twisted universal subscheme $$\mathcal Z\subset \mathcal X\times_{\mathcal M} 
\mathcal X^{[z]}\to \mathcal X^{[z]}\, .$$ 
\end{itemize}
The fixed loci are $$\mathbf A[z, w]= \mathcal X^{[z]}\times_{\mathcal M} \left(\mathcal X^{[w]}\times_{\mathcal M}\mathbb P\right).$$  Over the universal surface $$\mathbf A[z, w]\times_{\mathcal M}\mathcal X\, ,$$
the universal subsheaf equals $$\mathcal E=I_{\mathcal Z}\,\oplus \,I_{\mathcal W}\otimes \mathcal L^{-1}\otimes \mathcal H^{-1}.$$ The non-reduced obstruction bundle over $\mathbf A[z, w]$ is found by considering the fixed part of the virtual tangent bundle: $$\text{Ext}^{\bullet}(\mathcal E, \mathcal F)^{\text{fix}}=\text{Ext}^{\bullet}(I_{\mathcal Z}, \mathcal O_{\mathcal Z})+\text{Ext}^{\bullet}(I_{\mathcal W}\otimes \mathcal L^{-1}\otimes \mathcal H^{-1}, \mathcal O_X-I_{\mathcal W}\otimes \mathcal L^{-1}\otimes \mathcal H^{-1}).$$ To calculate this bundle explicitly, we follow the method of Section \ref{bob} for a {\it fixed} surface. The only difference is that in the use of relative duality, the relative canonical bundle will yield a copy of the Hodge class $$K_{\mathcal X/\mathcal M}=\pi^{\star} \mathbb E\, .$$ 
We only record here the final answer (for $z\neq 0$ and $w\neq 0$): $$\text{Obs}^{\text{red}}=\mathbb E^{\vee}+\mathbb E^{\vee}\otimes \left(\mathcal O^{[z]}\right)^{\vee}+\mathbb E^{\vee}\otimes \mathcal L \otimes \left((\mathcal H^{-1})^{[w]}\right)^{\vee}\, .$$ 
Just as in the case of a fixed surface where almost all fixed loci have a trivial summand,
the fixed loci here keep an additional copy of the Hodge bundle. The corresponding contribution $$\mathsf{Cont}\, \mathbf A[z, w]=m(z, w) \cdot \lambda$$ is a multiple of the Hodge class. In our examples below, $m(z, w)$ can be found via a calculation over the Hilbert scheme of a fixed surface. (A separate calculation shows that the same conclusion also holds for $w=0$.)

There are however two exceptions, where the formulas above must be modified. These correspond to the two fixed loci $\mathsf F^+$ and $\mathsf F^-$ where the splitting of the kernel is 
$$E=\mathcal O_X[0]\,\oplus\, I_W\otimes \mathcal O_X(-C)[1] \ \ \ \text{or} \ \ \
E=I_Z\otimes \mathcal O_X(-C)[0]\,\oplus \,\mathcal O_X[1]$$ for a curve $C$ in the linear series $|H|$. We have $$\mathsf F^+\simeq\mathsf F^-\simeq \mathcal X^{[n]}\times_{\mathcal M} \mathbb P\, .$$ The following hold true:

\begin{itemize}
\item [--] The obstruction bundle equals $$\text{Obs}^{\text{red}}=\mathbb E^{\vee}\otimes\, \mathcal L\otimes \left((\mathcal H^{-1})^{[n]}\right)^{\vee}\, ,$$ so that $$\left[\mathsf F^+\right]^{\text{vir}}= \left[\mathsf F^-\right]^{\text{vir}}=(-1)^n\cdot \mathsf e\left(\mathbb E\otimes\, \mathcal L^{-1}\otimes (\mathcal H^{-1})^{[n]}\right)\, .$$
\item [--] The virtual normal bundle is $$\mathsf N^{\pm}=\left(\mathbb C+\mathbb E^{\vee}+\mathcal L^{-1} \otimes (\mathcal H^{-1})^{[n]}-\mathcal L^{-1} \otimes \mathbb V^{\vee} \otimes \mathbb E^{\vee}\right)[\pm 1]\, .$$ The formulas of Section \ref{bob} are special cases. 
\end{itemize}
Therefore, 
\begin{eqnarray*}
\frac{1}{\mathsf e(\mathsf N^{+})} &=& \frac{c_-(\mathcal L \otimes \mathbb V\otimes\mathbb E)}{1-\lambda}\cdot \frac{1}{c_+(\mathcal L^{-1}\otimes (\mathcal H^{-1})^{[n]})}\, ,\\
\frac{1}{\mathsf e(\mathsf N^{-})} &=&(-1)^{n-\ell}\cdot \frac{c_+(\mathcal L \otimes \mathbb V\otimes\mathbb E)}{1+\lambda}\cdot \frac{1}{c_-(\mathcal L^{-1}\otimes (\mathcal H^{-1})^{[n]})}\, .
\end{eqnarray*} The localization contributions of the fixed loci $\mathsf F^+$ and $\mathsf F^-$ are equal and  take the form $$q_{\star}\left(\zeta^{2k} \cdot \mathsf e\left(\mathbb E\otimes\, \mathcal L^{-1}\otimes (\mathcal H^{-1})^{[n]}\right)\cdot \frac{c_-(\mathcal L \otimes \mathbb V\otimes\mathbb E)}{1-\lambda}\cdot \frac{1}{c_+(\mathcal L^{-1}\otimes (\mathcal H^{-1})^{[n]})}\right),$$ where $$q:\mathcal X^{[n]}\times_{\mathcal M} \mathbb P\to \mathcal M_{2\ell}$$ is the projection. 
This expression can be expanded as in Section \ref{cat}.

The following push-forwards from the relative Hilbert scheme
are central 
to the calculation:
 $$\gamma_i=\text{pr}_{\star}\left(s_{n+i+1}\left((\mathcal H^{-1})^{[n]}\right)\cdot c_{n-i}\left((\mathcal H^{-1})^{[n]}\right)\right)\in \mathsf A^1(\mathcal M_{2\ell}),\,\,\,\, 0\leq i\leq n .$$ For the sum of contributions of 
fixed loci dominating $\mathcal{M}_{2\ell}$, we obtain
$$(-1)^{\ell+1}\cdot \left(\sum_{i=0}^{n}\binom{\ell+1-2n-i}{\ell+1-2k-i}\cdot \gamma_i\, +\, 
a_{k}\cdot c_1(\mathbb V)\,  +\, b_k\cdot \lambda\right),$$ for constants $a_k$ and $b_k$ that depend on $\ell$, $k$, $n$. For instance, $$a_{k}=\sum_{i=0}^{n} \binom{\ell+1-2n-i}{\ell+2-2k-i} \alpha_i\, .$$ The expression thus obtained must be in the span of the Noether-Lefschetz divisors. Similar relations can written down in higher codimension. 

The class $$\gamma_n=\text{pr}_{\star}\left(s_{2n+1}\left((\mathcal H^{-1})^{[n]}\right)\right)$$ is the codimension one analogue of the Segre integrals which are the subject of Lehn's conjecture.  
While there are procedures to calculate $\gamma_n$, we do not yet 
have a closed form expression. Nonetheless, by the recursions of \cite {EGL}, it can be seen that $$\gamma_n\in  \mathsf R^{\star}(\mathcal M_{2\ell})$$ is a combination of the classes $\kappa_{3, 0}$, $\kappa_{1, 1}$, $\lambda$ with coefficients which are polynomials in $\ell$. 

\subsection {Examples} We present here explicit relations
involving $\kappa$ classes and
Noether-Lefschetz divisors. As explained in the introduction of \cite {MP}, there are two essentially equivalent ways of thinking of the Noether-Lefschetz
 divisors. Our convention here is that we specify the {\it Picard class}. In other words, we consider surfaces for which there exists a class $\beta$ with given lattice $$\begin{bmatrix} H^2 & \beta\cdot H\\ \beta\cdot H & \beta^2\end{bmatrix}.$$

\subsubsection{Degree two} The simplest case is of 
$K3$ surfaces of degree $H^2=2$. Relevant to the discussion are the following Noether-Lefschetz divisors:

\begin{itemize} 
\item [(i)] The divisor $\mathcal P$ corresponding to the lattice $$\begin{bmatrix} 2 & 1 \\ 1 & 0 \end{bmatrix}.$$ 
\item [(ii)] The reduced divisor $\mathcal S$ corresponding to the lattice $$\begin{bmatrix} 2 & 0 \\ 0 & -2 \end{bmatrix}.$$  Here, the linear series $|H|$ fails to be ample along the $(-2)$-curve. The divisor $\mathcal S$ contains $\mathcal P$ as a component.
\end{itemize} 
By work of O'Grady \cite {OG}, the following isomorphism holds $$\mathsf A^1(\mathcal M_2, \mathbb Q)= \mathbb Q\cdot \left[\mathcal P\right]+\mathbb Q\cdot \left[\mathcal S\right].$$ However, there are more tautological classes to consider. In addition to the Hodge class $\lambda$, we also have 
$$\kappa_{3, 0}=\pi_{\star} (c_1(\mathcal H)^3)\ \ \ \text{and} \ \ \  \kappa_{1, 1}=\pi_{\star} (c_1(\mathcal H)\cdot c_2(\mathcal T_{\pi}))\, .$$ While the classes $\kappa_{3, 0}$ and $\kappa_{1, 1}$ are canonically defined only up to a choice of a quasi-polarization, the difference $$\kappa_{3, 0}-4\kappa_{1, 1}$$ is independent of choices. There are two relations connecting the four tautological classes $$\kappa_{3, 0}-4\kappa_{1, 1},\,\, \lambda,\,\,\, \left[\mathcal P\right],\,\,\, \left[\mathcal S\right].$$ \begin {proposition}\label{p1} We have
 $$\kappa_{1, 1}-4\kappa_{3, 0}-18\lambda +12\left[\mathcal P\right]=0\, ,$$
 $$\kappa_{1, 1}-4\kappa_{3, 0}+\frac{9}{2}\lambda-\frac{24}{5}\left[\mathcal P\right]-\frac{3}{10}\left[\mathcal S\right]=0.$$
\end {proposition}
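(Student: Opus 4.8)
The plan is to specialize the general localization machinery of Section \ref{s4} to the case $H^2=2$ (so $\ell=1$) and extract the codimension-one relations from the vanishing push-forwards
$$\frac{(-1)^n}{2}\cdot p_{\star}\left(0^{2n+2-2k}\cdot \zeta^{2k}\cap \left[\pranktwopi\right]^{\text{red}}\right)=0\ \in\ \mathsf A^1(\mathcal M_2,\mathbb Q)$$
for suitable small values of $n$ and $k$. Since we seek two independent relations among the four classes $\kappa_{3,0}-4\kappa_{1,1}$, $\lambda$, $[\mathcal P]$, $[\mathcal S]$, I expect $n=1$ (or at most $n=2$) to suffice: with $\ell=1$ the relative linear series $\mathbb P=\mathbb P(\mathbb V)$ has fiber dimension $\ell+1=2$, and the generic-fixed-locus contribution is given explicitly in Section \ref{s4} in terms of $\gamma_i$, $c_1(\mathbb V)$, and $\lambda$.

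\textbf{Key steps.}
First I would assemble the dominant contribution. By the formula at the end of Section \ref{s4}, the fixed loci $\mathsf F^{\pm}$ together with the loci $\mathbf A[z,w]$ ($z,w\neq 0$) contribute
$$(-1)^{\ell+1}\left(\sum_{i=0}^{n}\binom{\ell+1-2n-i}{\ell+1-2k-i}\gamma_i+a_k\, c_1(\mathbb V)+b_k\,\lambda\right),$$
and for small $n$ the classes $\gamma_i$ are computable: $\gamma_n\in\RR(\mathcal M_{2\ell})$ is a known combination of $\kappa_{3,0}$, $\kappa_{1,1}$, $\lambda$ by the \cite{EGL} recursions, while the lower $\gamma_i$ ($i<n$) reduce, via the relation $\alpha_0+\cdots+\alpha_n=0$ and the vanishing results $\alpha_i(\ell)$ of Section \ref{pppp}, to explicit multiples of $\lambda$ and $c_1(\mathbb V)$. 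One also uses that $c_1(\mathbb V)=c_1(\pi_\star\mathcal H)$ is itself expressible in the tautological generators (and on $\mathcal M_2$ is supported on Noether--Lefschetz divisors). Second, I would compute the \emph{excess} Noether--Lefschetz contributions. For $\ell=1$ the constraint $C_1\cdot H+C_2\cdot H=2\ell=2$ with $C_1,C_2$ effective forces $C_1\cdot H=C_2\cdot H=1$ and $C_1\cdot C_2\le n$, so only finitely many split classes occur; these produce fixed loci supported exactly over the divisors $\mathcal P$ and $\mathcal S$. I would evaluate $m(z,w)$ and the split-locus contributions as Hilbert-scheme integrals over a fixed surface, following the method used for $\mathbf A[z,w]$, obtaining the coefficients of $[\mathcal P]$ and $[\mathcal S]$. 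Third, setting the total localization output to zero for two distinct choices of $(n,k)$ yields two linear relations among $\kappa_{3,0}-4\kappa_{1,1}$, $\lambda$, $[\mathcal P]$, $[\mathcal S]$; solving and normalizing should reproduce the two displayed equations.

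\textbf{Main obstacle.}
The hard part will be the precise bookkeeping of the split (Noether--Lefschetz) fixed loci and their virtual contributions. Unlike the generic loci $\mathsf F^{\pm}$ and $\mathbf A[z,w]$, whose obstruction bundles and normal complexes are recorded in Section \ref{s4}, the split loci require identifying which effective decompositions $[H]=[C_1]+[C_2]$ actually occur on $\mathcal M_2$, matching each to the correct Picard lattice so as to distinguish the contributions landing on $\mathcal P$ from those landing on the larger reduced divisor $\mathcal S$ (which contains $\mathcal P$), and then computing the intersection-theoretic multiplicities—accounting for the failure of ampleness of $|H|$ along the $(-2)$-curve in the $\mathcal S$ case. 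Getting the rational coefficients of $[\mathcal P]$ and $[\mathcal S]$ correct, together with the normalization of $\kappa_{3,0}-4\kappa_{1,1}$ as the choice-independent combination, is where the genuine care is needed; the remaining algebra—expanding the binomial sums and the Chern/Segre generating series as in Section \ref{cat}—is routine.
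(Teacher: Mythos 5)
Your plan is correct and is essentially the paper's own proof: the two relations come from the vanishing push-forwards $p_{\star}\bigl(0^{4}\cap [\mathcal Q^{\,\pi}_{\,H,\,0}(\mathbb C^2)]^{\text{red}}\bigr)$ and $p_{\star}\bigl(0^{6}\cap [\mathcal Q^{\,\pi}_{\,H,\,1}(\mathbb C^2)]^{\text{red}}\bigr)$ (i.e.\ $n=1$ and $n=2$, both with $k=0$), with the dominant contribution reduced to $\kappa_{3,0}$, $\kappa_{1,1}$, $\lambda$ via $\gamma_1=\kappa_{3,0}$ and the Grothendieck--Riemann--Roch expression for $c_1(\mathbb V)$, and the excess contribution over $\mathcal P$ computed as an equivariant integral over $|H_1|\times|H_2|=\mathbb P^1\times\mathbb P^1$ yielding multiplicity $-2$ per fixed locus. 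The one bookkeeping point you flag as the main obstacle resolves exactly as you anticipate: the constraint $\ell(Z)+\ell(W)=n-C_1\cdot C_2\geq 0$ excludes the $(-2)$-curve splitting when $n=1$ (so $[\mathcal S]$ is absent from the first relation) and admits it when $n=2$.
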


\begin{proof}
The relations are obtained by considering the following numerics:
\begin{itemize}
\item [(i)] For the first relation, we use the relative Quot scheme of short exact sequences $$0\to E\to \mathbb C^2\otimes \mathcal O_X\to F\to 0$$ where $$c_1(F)=H\, ,\ \ \chi(F)=0.$$ The relative virtual dimension equals $3$, and we evaluate the pushforward $$p_{\star}(0^4\cap [\mathcal Q^{\,\pi}_{\,H,\, 0}(\mathbb C^2)]^{\text{red}})\in \mathsf A^1(\mathcal M_2)\, .$$
\item [(ii)] For the second relation, we use the numerics $$c_1(F)=H\, ,\ \  \chi(F)=1.$$ The relative virtual dimension equals $5$, and we consider the pushforward $$p_{\star}(0^6\cap [\mathcal Q^{\,\pi}_{\, H, \,1}(\mathbb C^2)]^{\text{red}})\in \mathsf A^1(\mathcal M_2)\, .$$  
\end{itemize}

We provide details only for case (i)  since (ii) is parallel. 
\begin {itemize}
\item [--] Over a generic surface $(X, H)$, the total contribution can be found by the method of Section \ref{s4}. The answer is $$2\gamma_1-4c_1(\mathbb V)=2\kappa_{3, 0}-4c_1(\mathbb V)=\frac{4\kappa_{3, 0}-\kappa_{1, 1}}{3}+6\lambda\, .$$ 
Grothendieck-Riemann-Roch was used to express $$c_1(\mathbb V)=-\frac{3\lambda}{2}+\frac{\kappa_{1, 1}}{12}+\frac{\kappa_{3, 0}}{6}.$$

\item [--] Over the locus $\mathcal P$, the classes $H_1=\beta$, $H_2=H-\beta$ are effective with $$H_1\cdot H_2=1\, , \ \ H_1^2=0\, ,\ \  H_2^2=0\, .$$ The torus fixed kernels are $$E=\mathcal O(-C_1)+\mathcal O(-C_2)$$ for curves $C_1$ and $C_2$ in the linear series $|H_1|$ and $|H_2$. There are two fixed loci, with identical contributions, obtained by switching the weights. The multiplicity of the divisor $\mathcal P$ in the final calculation equals $-2$ over each fixed locus. 

The $-2$ can be confirmed by working over a fixed surface. Each fixed locus is isomorphic to $$|H_1|\times |H_2|=\mathbb P^1\times \mathbb P^1.$$ The fixed part of the tangent obstruction theory equals $$\text{Ext}^{\bullet}(\mathcal O(-C_1), \mathcal O_{C_1})+\text{Ext}^{\bullet}(\mathcal O(-C_2), \mathcal O_{C_2}).$$ The Hodge factor of the reduced theory cancels the additional Hodge class coming from the normal bundle of the divisor $\mathcal P$ in the moduli stack, see \cite {OG}. The virtual class of the fixed locus agrees with the usual fundamental class. 

Similarly, the normal bundle is found by considering the moving part $$\mathsf N=\text{Ext}^{\bullet}(E, F)^{\text{mov}}=\mathsf N_1[1]+\mathsf N_2[-1]$$ where $$\mathsf N_1=\text{Ext}^{\bullet}(\mathcal O(-C_1), \mathcal O_{C_2})=H^{\bullet}(\mathcal O(C_1)|_{C_2})=H^{\bullet}(\mathcal O(C_1))-H^{\bullet}(\mathcal O(C_1-C_2))$$ $$\mathsf N_2=\text{Ext}^\bullet(\mathcal O(-C_2), \mathcal O_{C_1})=H^{\bullet}(\mathcal O(C_2)|_{C_1})=H^{\bullet}(\mathcal O(C_2))-H^{\bullet}(\mathcal O(C_2-C_1)).$$ In families, these formulas also include the the tautological bundles $\mathcal L_1$ and $\mathcal L_2$ over the linear series. We have 
$$\mathsf N_1=H^{\bullet}(H_1)\otimes \mathcal L_1-H^{\bullet}(H_1\otimes H_2^{-1})\otimes \mathcal L_1\otimes \mathcal L_2^{-1}=\mathcal L_1\otimes \mathbb C^2-\mathcal L_1\otimes \mathcal L_{2}^{-1}$$ and analogously $$\mathsf N_2=\mathcal L_2\otimes \mathbb C^2-\mathcal L_2\otimes \mathcal L_1^{-1}.$$ The final multiplicity becomes $$\int_{\mathbb P^1\times \mathbb P^1} \frac{1}{\mathsf e(\mathsf N)}=\int_{\mathbb P^1\times \mathbb P^1}\frac{(1+\zeta_1-\zeta_2)\cdot (-1-\zeta_1+\zeta_2)}{(1+\zeta_1)^2(-1+\zeta_2)^2}=-2\, .$$ 
\end {itemize}
\end{proof}

\begin{remark} We can write down relations in higher codimension as well. For instance, evaluating the pushforward
$$p_{\star}(0^4\cdot \zeta\cap [\mathcal Q^{\,\pi}_{\,H,\, 0}(\mathbb C^2)]^{\text{red}})\in \mathsf A^2(\mathcal M_2)$$
yields  the identity
$$11\kappa_{4, 0} - \kappa_{2, 1} -
3 \kappa_{3, 0}^2- 4 \kappa_{3, 0} \cdot \left[\mathcal P\right]  + 6 \kappa_{3, 0}\cdot \lambda - 8 \lambda\cdot \left[\mathcal P\right] +4\lambda^2=0.$$
\end{remark}

\subsubsection {Degree four} Our next example concerns the case of quartic 
$K3$ surfaces with $H^2=4$. We single out three Noether-Lefschetz divisors: 

\begin {itemize} 
\item [(i)] The reduced divisor $\mathcal P_1$ corresponding to the lattice $$\begin{bmatrix} 4 & 1\\ 1 &0\end{bmatrix}.$$
\item [(ii)] The reduced divisor $\mathcal P_2$ underlying the Noether-Lefschetz locus corresponding to the lattice $$\begin{bmatrix} 4 & 2\\ 2 &0\end{bmatrix}.$$ The divisor $\mathcal P_2$ is not irreducible, since it receives contributions from hyperlliptic locus and from $\mathcal P_1$.
\item [(iii)] The reduced divisor $\mathcal S$ corresponding to the lattice $$\begin{bmatrix} 4 & 0\\ 0 &-2\end{bmatrix}.$$ Here, the linear series $|H|$ fails to be ample along the $(-2)$-curve.
\end {itemize} 
 A $K3$ surface in the complement of the union of the 
 three divisors $\mathcal P_1,\, \mathcal P_2,\, \mathcal S$ is necessarily a 
nonsingular quartic in $\mathbb P^3$. Again by work of \cite {OG}, we have an isomorphism $$\mathsf A^1(\mathcal M_4, \mathbb Q)= \mathbb Q\cdot \left[\mathcal P_1\right]+\mathbb Q\cdot \left[\mathcal P_2\right]+\mathbb Q\cdot \left[\mathcal S\right].$$ The class $$\kappa_{1, 1}-2\kappa_{3, 0}$$ is independent of the choice of polarization. The following relations are obtained:
\begin {proposition} \label{p2} We have
$$10(\kappa_{1,1}-2\kappa_{3, 0})-38\lambda+8[\mathcal P_1]-2[\mathcal P_2]-3[\mathcal S]=0\, ,$$
$$4(\kappa_{1,1}-2\kappa_{3, 0})-26\lambda+16[\mathcal P_1]+2[\mathcal P_2]-[\mathcal S]=0\, .$$
\end {proposition}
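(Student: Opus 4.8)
The plan is to derive the two relations in Proposition \ref{p2} by exactly the same mechanism used to prove Proposition \ref{p1}, but now for quartic $K3$ surfaces. I would apply the relative virtual localization machinery of Section \ref{s4} to two different choices of numerics on the $\pi$-relative Quot scheme $\pranktwopi$ over $\mathcal{M}_4$, producing two codimension one vanishing push-forwards, and then read off the coefficients of the tautological classes $\kappa_{1,1}-2\kappa_{3,0}$, $\lambda$, and the three Noether-Lefschetz divisors $[\mathcal{P}_1]$, $[\mathcal{P}_2]$, $[\mathcal{S}]$. Concretely, for each relation I would fix a value of $\chi$ (hence of $n=\chi+\ell=\chi+2$), form the vanishing push-forward $\frac{(-1)^n}{2}\, p_\star\!\left(0^{2n+2-2k}\cdot \zeta^{2k}\cap [\pranktwopi]^{\text{red}}\right)\in \mathsf{A}^1(\mathcal{M}_4,\mathbb{Q})$ for suitable $k$, and expand it by equivariant localization.

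The first step is to compute the contribution of the fixed loci dominating $\mathcal{M}_4$. These are the loci $\mathsf{F}^+$ and $\mathsf{F}^-$ together with the loci $\mathbf{A}[z,w]$ with $z,w\neq 0$; by Section \ref{s4} the latter contribute only multiples $m(z,w)\cdot\lambda$ of the Hodge class, while $\mathsf{F}^\pm$ give the expression built from $\gamma_i$, $c_1(\mathbb{V})$, and $\lambda$. Here I would use that $\gamma_n\in\RR(\mathcal{M}_4)$ is expressible, via the \cite{EGL} recursions, as a combination of $\kappa_{3,0}$, $\kappa_{1,1}$, and $\lambda$ with coefficients polynomial in $\ell=2$, and that Grothendieck-Riemann-Roch yields $c_1(\mathbb{V})$ in terms of $\kappa_{3,0}$, $\kappa_{1,1}$, and $\lambda$ (the analogue of the identity $c_1(\mathbb{V})=-\frac{3\lambda}{2}+\frac{\kappa_{1,1}}{12}+\frac{\kappa_{3,0}}{6}$ used in Proposition \ref{p1}, now recomputed for degree four). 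The combination that survives is the polarization-independent class $\kappa_{1,1}-2\kappa_{3,0}$ together with a multiple of $\lambda$.

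The second step, which is the genuinely new content, is to compute the multiplicity with which each Noether-Lefschetz divisor $\mathcal{P}_1$, $\mathcal{P}_2$, $\mathcal{S}$ enters. Over each such divisor the class $H$ splits as $[C_1]+[C_2]$ into effective non-multiples, giving rise to the extra fixed loci described in Section \ref{s4}. As in Proposition \ref{p1}, the Hodge factor of the reduced obstruction theory cancels the Hodge class coming from the normal direction of the divisor in the moduli stack (invoking \cite{OG}), so that the virtual class of each fixed locus reduces to its ordinary fundamental class, and the multiplicity is computed by a Bott-style integral $\int \frac{\zeta^{2k}}{\mathsf{e}(\mathsf{N})}$ over a product of the relevant linear series $|H_1|\times|H_2|$ on a fixed surface. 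I would enumerate, for each of the two choices of numerics, the finitely many splittings $[H]=[C_1]+[C_2]$ with $C_1\cdot H+C_2\cdot H=4$ and $\ell(Z)+\ell(W)=n-C_1\cdot C_2$, match them to the lattices of $\mathcal{P}_1$, $\mathcal{P}_2$, $\mathcal{S}$, and sum the localized integrals (remembering the factor of two from swapping the two torus weights). Assembling the dominating contribution and the divisor multiplicities, the total vanishes, yielding each stated relation.

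The main obstacle I anticipate is the bookkeeping in the second step: correctly enumerating all the splitting types over each Noether-Lefschetz divisor for quartics, accounting for the reducibility of $\mathcal{P}_2$ (which absorbs contributions from both the hyperelliptic locus and $\mathcal{P}_1$) so as to express everything consistently in the chosen basis $[\mathcal{P}_1],[\mathcal{P}_2],[\mathcal{S}]$ of $\mathsf{A}^1(\mathcal{M}_4,\mathbb{Q})$, and evaluating the several residue integrals over products of linear series without sign or normalization errors. The dominating-locus computation is a routine adaptation of Section \ref{cat} specialized to $\ell=2$, so the delicate part is ensuring the non-generic fixed loci are counted with the right multiplicities and that the \cite{OG} cancellation of the Hodge normal factor is applied uniformly across all three divisors.
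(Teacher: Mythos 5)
Your proposal matches the paper's proof, which consists precisely of the remark that the details are similar to Proposition \ref{p1} together with the specification that both relations come from the single relative Quot scheme $\mathcal Q^{\,\pi}_{\,H,\,0}(\mathbb C^2)$ (numerics $c_1(F)=H$, $\chi(F)=0$, so $n=2$ and relative virtual dimension $5$), the first by evaluating $p_{\star}(0^6\cap [\,\cdot\,]^{\text{red}})$ and the second by evaluating $p_{\star}(0^4\cdot \zeta^2\cap [\,\cdot\,]^{\text{red}})$. The only immaterial deviation is that you suggest varying $\chi$ between the two relations whereas the paper fixes $\chi=0$ and varies the power of $\zeta$; the mechanism you describe --- dominating contributions via the $\gamma_i$ and Grothendieck--Riemann--Roch for $c_1(\mathbb V)$, plus Noether--Lefschetz multiplicities from Bott residues over products of linear series with the O'Grady cancellation of the Hodge factor --- is exactly the paper's.
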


The proof of Proposition \ref{p2} uses the relative Quot scheme of short exact sequences $$0\to E\to \mathbb C^2\otimes \mathcal O_X\to F\to 0$$ where the numerics are chosen so that $$c_1(F)=H\, ,\ \ \  \chi(F)=0.$$ The relative virtual dimension equals $5$. 
The first relation  is obtained by evaluating the pushforward $$p_{\star}(0^6\cap [\mathcal Q^{\,\pi}_{\,H, \,0}(\mathbb C^2)]^{\text{red}})\in \mathsf A^1(\mathcal M_4)\, ,$$ while second relation  is obtained by evaluating $$p_{\star}(0^4\cdot \zeta^2 \cap [\mathcal Q^{\,\pi}_{\,H, \,0}(\mathbb C^2)]^{\text{red}})\in \mathsf A^1(\mathcal M_4)\, .$$ The details are similar to the proof of Proposition \ref{p1}. 
 
\subsubsection{Degree 6} Consider the reduced Noether-Lefschetz divisors 
$$\mathcal P_1, \,\mathcal P_2,\, \mathcal P_3, \,\mathcal S$$
corresponding to the lattices 
$$\begin{bmatrix} 6 & 1\\ 1 &0\end{bmatrix}, \,\begin{bmatrix} 6 & 2\\ 2 &0\end{bmatrix}, \,\begin{bmatrix} 6 & 3\\ 3 &0\end{bmatrix}, \,\begin{bmatrix} 6 & 0 \\ 0 & -2 \end{bmatrix}.$$ 
The divisors $\mathcal P_2$ and $\mathcal P_3$ are not irreducible, since they contain $\mathcal P_1$ as a component. It is shown in \cite {OG} that the Picard rank is $4$. We expect two relations between the invariant combination of the $\kappa$'s, the Hodge class and the $4$ boundary divisors:
\begin{proposition} We have
$$(-3\kappa_{1,1}+4\kappa_{3, 0})+21\lambda-16[\mathcal P_1]-[\mathcal P_2]+[\mathcal S]=0,$$
$$-11(3\kappa_{1, 1}-4\kappa_{3, 0})+280\lambda-230 \left[\mathcal P_1\right]-38\left[\mathcal P_2\right]-2\left[\mathcal P_3\right]+10\left[\mathcal S\right]=0.$$\end{proposition}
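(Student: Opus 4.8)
The plan is to follow the method used for Propositions \ref{p1} and \ref{p2}, now with $H^2 = 6$, so $\ell = 3$. I would work with the relative Quot scheme $\pranktwopi$ for the numerics $c_1(F) = H$ and $\chi(F) = 0$, so that $n = \chi + \ell = 3$ and the relative reduced virtual dimension is $2n+1 = 7$. The two relations should arise from the two codimension-one vanishing push-forwards of Section \ref{s4} obtained by setting $k=0$ and $k=1$ in the family $0^{\,2n+2-2k}\cdot \zeta^{2k}$, namely $p_\star\bigl(0^{8}\cap[\cdot]^{\text{red}}\bigr)$ and $p_\star\bigl(0^{6}\zeta^{2}\cap[\cdot]^{\text{red}}\bigr)$, both landing in $\mathsf A^1(\mathcal M_6,\mathbb Q)$. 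Evaluating each by virtual localization for the torus action induced by $\mathbb C^2 = \mathbb C[0] + \mathbb C[1]$ and setting the result to zero produces one relation per value of $k$.

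For the fixed loci dominating $\mathcal M_6$, I would invoke the analysis of Section \ref{s4} verbatim: the loci $\mathbf A[z,w]$ with $z,w\neq 0$ retain a surplus Hodge factor and contribute multiples $m(z,w)\cdot\lambda$, while the two loci $\mathsf F^{\pm}\simeq\mathcal X^{[n]}\times_{\mathcal M}\mathbb P$ contribute equally and expand, exactly as in Section \ref{cat}, into $(-1)^{\ell+1}\bigl(\sum_{i=0}^{n}\binom{\ell+1-2n-i}{\ell+1-2k-i}\gamma_i + a_k\, c_1(\mathbb V) + b_k\lambda\bigr)$. With $\ell=n=3$ the binomials are explicit, and $a_k$ is the combination $\sum_i\binom{\ell+1-2n-i}{\ell+2-2k-i}\alpha_i$ of the now-known fixed-surface integrals $\alpha_i$ of Lemma \ref{identities}. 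To turn this into a relation among the named classes I would express $c_1(\mathbb V)$ through Grothendieck--Riemann--Roch as in Proposition \ref{p1} and expand $\gamma_0,\dots,\gamma_3$ in terms of $\kappa_{3,0}$, $\kappa_{1,1}$, and $\lambda$ via the recursions of \cite{EGL}; the top class $\gamma_3=\gamma_n$ has no closed form but, as noted in Section \ref{s4}, is a polynomial-in-$\ell$ combination of these three classes, which is all that is needed.

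The genuinely new work lies in the Noether--Lefschetz contributions. For $H^2=6$ I would enumerate the effective splittings $H=C_1+C_2$ with $C_1\cdot H + C_2\cdot H = 6$ and $\ell(Z)+\ell(W) = 3 - C_1\cdot C_2\geq 0$, and match each splitting type to the divisors $\mathcal P_1,\mathcal P_2,\mathcal P_3$ (arising from classes $\beta$ with $\beta^2=0$ and $\beta\cdot H = 1,2,3$, giving $C_1\cdot C_2 = 1,2,3$) and to $\mathcal S$ (the $(-2)$-locus where $|H|$ degenerates). On each such fixed locus, a product of relative linear series and relative Hilbert schemes over the Noether--Lefschetz divisor, the multiplicity is computed by a local Euler-class integral of the moving part $\mathsf N = \mathsf N_1[1] + \mathsf N_2[-1]$, precisely as in the $\int_{\mathbb P^1\times\mathbb P^1}\mathsf e(\mathsf N)^{-1} = -2$ computation of Proposition \ref{p1}, with the reduced Hodge factor cancelling the Hodge class coming from the normal direction of the divisor inside $\mathcal M_6$.

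The main obstacle will be this last step: unlike the degree two and four cases, degree six has four boundary divisors and several competing splitting types, and $\mathcal P_2,\mathcal P_3$ are reducible and contain $\mathcal P_1$ as a component, so the raw localization output must be carefully disentangled into multiplicities against the \emph{reduced} classes $[\mathcal P_1],[\mathcal P_2],[\mathcal P_3],[\mathcal S]$. Once the generic contribution and the four divisor multiplicities are assembled for $k=0$ and $k=1$ and set to zero, I expect to read off exactly the two stated relations; the absence of $\mathcal P_3$ from the first relation and its appearance in the second should reflect that the highly split configuration $C_1\cdot C_2 = 3$, $\ell(Z)=\ell(W)=0$ is detected only by the $\zeta^{2}$ insertion.
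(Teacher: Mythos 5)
Your overall strategy (localize the reduced virtual class of a $\pi$-relative rank-two Quot scheme and set the codimension-one push-forwards to zero) is the paper's strategy, but the specific integrals you chose are not the ones the paper uses, and the discrepancy is detectable in the shape of the stated relations. The paper obtains the two relations from \emph{two different Quot schemes}, $\mathcal Q^{\,\pi}_{\,H,\,-1}(\mathbb C^2)$ and $\mathcal Q^{\,\pi}_{\,H,\,0}(\mathbb C^2)$, i.e.\ from $n=\chi+\ell=2$ and $n=3$ respectively (as in Proposition \ref{p1}, where the two relations likewise come from $\chi=0$ and $\chi=1$), whereas you take $\chi=0$ twice and vary the insertion $0^{2n+2-2k}\zeta^{2k}$ over $k=0,1$ (the degree-four pattern of Proposition \ref{p2}). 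This matters for your final paragraph: a fixed locus over $\mathcal P_j$ exists only when $\ell(Z)+\ell(W)=n-C_1\cdot C_2\geq 0$, and for $\mathcal P_3$ one has $C_1\cdot C_2=3$, so the genuine reason $[\mathcal P_3]$ is absent from the first relation is that it is computed with $n=2$, where no such fixed locus exists at all. Your proposed explanation --- that the $C_1\cdot C_2=3$ configuration is ``detected only by the $\zeta^2$ insertion'' --- is not correct: on the $\mathbb P^1\times\mathbb P^1$ fixed locus over $\mathcal P_3$ the equivariant lift of $0$ restricts to a nonzero pure weight and $\zeta$ restricts to $\zeta_1+\zeta_2$, so this locus contributes to the $k=0$ push-forward as well (compare the nonvanishing $\int_{\mathbb P^1\times\mathbb P^1}\mathsf e(\mathsf N)^{-1}=-2$ in the proof of Proposition \ref{p1}). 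Consequently both of your relations would generically carry a $[\mathcal P_3]$ term and you would not ``read off exactly'' the two displayed identities.

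This is a repairable defect rather than a fatal one: since $\mathsf A^1(\mathcal M_6,\mathbb Q)$ has rank $4$ and six classes appear, the space of relations is two-dimensional, so if your two push-forwards yield independent relations they span the same space and the stated identities follow after an invertible linear recombination --- but that independence is an additional check you have not addressed, and the cleaner route (and the one the paper takes) is to lower $\chi$ to $-1$ so that the $\mathcal P_3$ contribution is excluded for structural reasons. The remainder of your outline --- the generic-locus expansion in the $\gamma_i$, the Grothendieck--Riemann--Roch expression for $c_1(\mathbb V)$, the Euler-class multiplicities over the Noether--Lefschetz divisors, and the need to disentangle the reducible divisors $\mathcal P_2,\mathcal P_3$ containing $\mathcal P_1$ --- is consistent with what the paper does.
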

\noindent These relations can be found by integrating over $\mathcal Q^{\,\pi}_{\,H, \,-1}(\mathbb C^2)$ and over $\mathcal Q^{\,\pi}_{\,H, \,0}(\mathbb C^2).$

\subsubsection {Degree 8} In this case, the Picard rank is $4$, see \cite {OG}. We expect two relations between the invariant combination of the $\kappa$'s, the Hodge class and the $4$ boundary divisors $$\mathcal P_1, \,\mathcal P_2,\,\mathcal P_3, \,\,\mathcal S$$
corresponding to the lattices 
$$\begin{bmatrix} 8 & 1\\ 1 &0\end{bmatrix}, \,\begin{bmatrix} 8 & 2\\ 2 &0\end{bmatrix},\, \,\begin{bmatrix} 8 & 3\\ 3 &0\end{bmatrix},\,\,\begin{bmatrix} 8 & 0 \\ 0 & -2 \end{bmatrix}.$$ Integrating over $\mathcal Q^{\,\pi}_{\,H, \,-2}(\mathbb C^2)$ and $\mathcal Q^{\,\pi}_{\,H, \,-1}(\mathbb C^2)$ we find:
\begin{proposition}
$$\frac{8}{3}(\kappa_{1,1}-\kappa_{3, 0})-24\lambda+24[\mathcal P_1]+4[\mathcal P_2]-[\mathcal S]=0,$$ 
$$-\frac{128}{3}(\kappa_{1, 1}-\kappa_{3, 0})+424\lambda-440 \left[\mathcal P_1\right]-92\left[\mathcal P_2\right]-8\left[\mathcal P_3\right]+15\left[\mathcal S\right]=0.$$
\end{proposition}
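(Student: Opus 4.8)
The plan is to run, over $\mathcal M_8$ (so $\ell=4$), the same localization argument used for Propositions \ref{p1} and \ref{p2}. First I would set up the two $\pi$-relative Quot schemes $\mathcal Q^{\,\pi}_{\,H,\,-2}(\mathbb C^2)$ and $\mathcal Q^{\,\pi}_{\,H,\,-1}(\mathbb C^2)$, which by Lemma \ref{redclass} carry reduced virtual classes of relative dimension $2n+1$ with $n=\chi+\ell$ equal to $2$ and $3$. I would then form the codimension-one vanishing push-forwards $\frac{(-1)^n}{2}\,p_\star\!\left(0^{2n+2-2k}\zeta^{2k}\cap[\,\cdot\,]^{\mathrm{red}}\right)\in\mathsf A^1(\mathcal M_8,\mathbb Q)$ with $k=0$, taking the first relation from $n=2$ and the second from $n=3$, and evaluate each by the $\mathbb C^\star$-localization of Section \ref{s4}. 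Because the factor $0=c_1(\mathcal O)$ annihilates the class non-equivariantly, the total localization output must vanish, giving one relation in $\mathsf A^1(\mathcal M_8)$ per scheme.

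The two fixed loci $\mathsf F^{\pm}\cong\mathcal X^{[n]}\times_{\mathcal M}\mathbb P$ dominating $\mathcal M_8$ contribute, by the formula of Section \ref{s4}, the expression $(-1)^{\ell+1}\big(\sum_i\binom{\ell+1-2n-i}{\ell+1-2k-i}\gamma_i+a_k\,c_1(\mathbb V)+b_k\,\lambda\big)$, with the $a_k$ the explicit fixed-surface numbers $\sum_i\binom{\ell+1-2n-i}{\ell+2-2k-i}\alpha_i$. I would replace each $\gamma_i$ by its expression as a combination of $\kappa_{3,0},\kappa_{1,1},\lambda$ obtained from the \cite{EGL} recursions (a finite computation for $n=2,3$), and substitute $c_1(\mathbb V)$ by its Grothendieck--Riemann--Roch value, which for a $K3$ family is $c_1(\mathbb V)=\frac{1}{6}\kappa_{3,0}+\frac{1}{12}\kappa_{1,1}-\left(\frac{\ell}{2}+1\right)\lambda$, equal to $\frac{1}{6}\kappa_{3,0}+\frac{1}{12}\kappa_{1,1}-3\lambda$ for $\ell=4$. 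The polarization-dependent contributions to the $\kappa$-terms cancel, so that the $\kappa$'s enter only through the invariant combination $\kappa_{1,1}-\kappa_{3,0}$ (the $\ell=4$ case of $\kappa_{1,1}-\tfrac{4}{\ell}\kappa_{3,0}$, matching the $\ell=2$ pattern of Proposition \ref{p2}), together with a multiple of $\lambda$.

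Next I would enumerate the remaining fixed loci, which lie over the Noether--Lefschetz divisors and come from splittings $[H]=[C_1]+[C_2]$ into effective non-multiples of $H$, constrained by $C_1\cdot H+C_2\cdot H=8$ and $\ell(Z)+\ell(W)=n-C_1\cdot C_2\ge 0$. For $n=2$ the bound $C_1\cdot C_2\le 2$ admits the splittings attached to $\mathcal P_1$ (with $C_1\cdot C_2=1$), to $\mathcal P_2$ ($=2$) and to the $(-2)$-class of $\mathcal S$ ($=2$), but excludes $\mathcal P_3$, where $C_1\cdot C_2=3$; for $n=3$ the bound $C_1\cdot C_2\le 3$ also admits $\mathcal P_3$. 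This is precisely the structural reason $[\mathcal P_3]$ occurs only in the second relation. For each splitting I would compute the multiplicity of its divisor by a localization calculation over a fixed surface, exactly as in the computation $\int_{\mathbb P^1\times\mathbb P^1}1/\mathsf e(\mathsf N)=-2$ of Proposition \ref{p1}: restrict to the product of linear series $|C_1|\times|C_2|$ (times the relevant Hilbert-scheme factor when $Z$ or $W$ is nonempty), read off the moving part $\mathsf N$ of the reduced obstruction theory, and integrate $1/\mathsf e(\mathsf N)$. As there, the Hodge factor of the reduced theory cancels the Hodge class of the normal bundle of the divisor in the moduli stack, following \cite{OG}.

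Finally, equating the vanishing push-forward with the sum of the dominant and Noether--Lefschetz contributions, and using O'Grady's identification of $\mathsf A^1(\mathcal M_8,\mathbb Q)$ as the rank-$4$ span of $[\mathcal P_1],[\mathcal P_2],[\mathcal P_3],[\mathcal S]$ \cite{OG}, I would read off the two stated relations. I expect the main obstacle to be the multiplicity computations over the Noether--Lefschetz loci rather than the dominant contribution: the divisors $\mathcal P_2$ and $\mathcal P_3$ are reducible (they contain $\mathcal P_1$, and $\mathcal P_2$ meets the hyperelliptic locus), so one must verify that the localization records their generic points with the correct reduced multiplicities, while the $(-2)$-curve divisor $\mathcal S$ is the most delicate because there one factor of the splitting has a zero-dimensional linear series and the reduction versus normal-bundle bookkeeping must be handled with particular care.
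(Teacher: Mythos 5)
Your proposal follows exactly the paper's route: the paper derives these relations by localizing the vanishing push-forwards from $\mathcal Q^{\,\pi}_{\,H,\,-2}(\mathbb C^2)$ and $\mathcal Q^{\,\pi}_{\,H,\,-1}(\mathbb C^2)$ (so $n=2$ and $n=3$), with the dominant contribution expressed via the $\gamma_i$, $c_1(\mathbb V)$ and $\lambda$, and the Noether--Lefschetz multiplicities computed by fixed-surface localization as in Proposition \ref{p1}. Your setup of the numerics, the Grothendieck--Riemann--Roch value of $c_1(\mathbb V)$, the invariant combination $\kappa_{1,1}-\kappa_{3,0}$, and the bound $C_1\cdot C_2\le n$ explaining why $[\mathcal P_3]$ enters only the $n=3$ relation are all correct and consistent with the paper, which itself leaves these computational details to the reader.
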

\subsection{Conjecture}
The main conjecture suggested by the abundance of relations
obtained by localizing the virtual class of $\pi$-relative Quot schemes
is the following.
\begin{conjecture} \label{fff} For all $\ell\geq 1$,  we have
$\NL(\mathcal{M}_{2\ell}) = \RR(\mathcal{M}_{2\ell})$.
\end{conjecture}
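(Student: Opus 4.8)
The plan is to prove the nontrivial inclusion $\RR(\mathcal{M}_{2\ell}) \subseteq \NL(\mathcal{M}_{2\ell})$, the reverse being true by definition. Since $\RR(\mathcal{M}_{2\ell})$ is generated as a ring by the push-forwards $\iota_{\Lambda\star}(M)$ of monomials $M$ in the $\kappa^{\Lambda}$ classes from the Noether-Lefschetz loci $\mathcal{M}_{\Lambda}$, and since $\NL(\mathcal{M}_{2\ell})$ is already an algebra, it suffices to show that each such generator is Noether-Lefschetz. Each $\mathcal{M}_{\Lambda}$ is itself a moduli space of lattice-polarized $K3$ surfaces, so the entire apparatus of Section \ref{MMM} applies to it verbatim; granting that push-forwards under $\iota_{\Lambda}$ of the Noether-Lefschetz algebra of $\mathcal{M}_{\Lambda}$ land in $\NL(\mathcal{M}_{2\ell})$ --- which follows from the excess-intersection description of the strata --- we reduce to the universal case and prove, by induction on codimension, that every $\kappa$ class on $\mathcal{M}_{2\ell}$ lies in $\NL(\mathcal{M}_{2\ell})$.

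The base case is codimension one: by \cite{Ber} we have $\mathsf{NL}^1(\mathcal{M}_{2\ell}) = \mathsf{A}^1(\mathcal{M}_{2\ell},\mathbb{Q})$, so \emph{every} degree-one class --- including $\lambda$, $c_1(\mathbb{V})$, and $\kappa_{3,0}-4\kappa_{1,1}$ --- is automatically Noether-Lefschetz. For the inductive step, suppose every tautological class of codimension $<c$ is Noether-Lefschetz, and fix a $\kappa$ class of codimension $c$. Following Section \ref{MMM}, localize the reduced virtual class of the relative Quot scheme $\prankpi$ against insertions $\zeta^{j}\cdot 0^{k}$ with $k>0$ and push forward to $\mathcal{M}_{2\ell}$. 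The factor $0^{k}=c_1(\mathcal{O})^{k}$ forces the push-forward to vanish, so the localization sum produces a relation of the shape
\begin{equation*}
\sum_{i} P_i(\ell)\, \gamma_i \;+\; (\text{lower-codimension products}) \;=\; (\text{push-forwards from Noether-Lefschetz loci}),
\end{equation*}
where the right-hand side collects exactly the fixed loci supported over the splitting divisors $[H]=[C_1]+[C_2]$, and the $\gamma_i$ are push-forwards of Chern-Segre products from the relative Hilbert scheme. By the universality of \cite{EGL}, each $\gamma_i$ is a polynomial in the $\kappa$ classes with coefficients polynomial in $\ell$. Substituting, and using the inductive hypothesis to absorb all products of lower-codimension classes into $\NL(\mathcal{M}_{2\ell})$, the relation becomes, modulo $\NL(\mathcal{M}_{2\ell})$, a \emph{linear} identity among the codimension-$c$ $\kappa$ classes.

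The crux --- and the step I expect to be the main obstacle --- is to show that, as the numerics $\chi$, the rank $r$, and the exponents $(j,k)$ vary, the resulting family of linear relations has full rank, enough to solve for \emph{each} codimension-$c$ $\kappa$ class modulo $\NL(\mathcal{M}_{2\ell})$. This requires two inputs. First, a non-vanishing statement in the intersection theory of the relative Hilbert scheme guaranteeing that the relations are genuinely nontrivial; the paper isolates precisely this difficulty in Section \ref{MMM}. Second, a leading-term analysis of the coefficient matrix: one tracks the top-degree-in-$\ell$ behaviour of the $\gamma_i$ (as in the evaluation of $\gamma_n$ in terms of $\kappa_{3,0}$, $\kappa_{1,1}$, $\lambda$) together with the binomial coefficients in the localization output, and shows the resulting matrix is invertible --- most naturally through a Vandermonde-type argument in the spirit of Lemma \ref{shhs}, now run over the available numerics rather than over the index $k$.

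A promising route to this completeness statement is the parallel flagged in Section \ref{alice} between these relations and the virtual-localization relations on the moduli of stable quotients that yield the tautological relations of \cite{Ja,P,PP}; there the analogous system is known to be complete, and one hopes to transplant its combinatorial architecture. Granting the rank statement, solving for the $\kappa$ classes on each $\mathcal{M}_{\Lambda}$ and pushing forward exhausts the ring generators of $\RR(\mathcal{M}_{2\ell})$ by the reduction of the first paragraph. The entire weight of the problem therefore rests on the non-degeneracy of the relation matrix, which is exactly why the statement is recorded as a conjecture rather than a theorem.
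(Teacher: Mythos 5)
This statement is recorded in the paper as a conjecture, not a theorem: the authors only speculate that the localization relations from $\pi$-relative Quot schemes suffice to prove it, and report having checked this ``in small degree and small codimension.'' Your proposal follows exactly that speculated strategy, but it does not close the argument, and you say so yourself: the entire proof hinges on the claim that the family of linear relations obtained by varying $\chi$, $r$, and the insertions $(j,k)$ has full rank on the codimension-$c$ $\kappa$ classes modulo $\NL(\mathcal{M}_{2\ell})$. No such non-degeneracy statement is established here or in the paper; the paper explicitly isolates the needed non-vanishing results in the intersection theory of the relative Hilbert scheme as an open difficulty, and the analogy with the stable-quotients relations of \cite{Ja,P,PP} is suggestive but not a transferable argument (completeness of those systems rests on specific geometry of the moduli of curves that has no proven counterpart for $\mathcal{M}_{2\ell}$). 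A proof sketch whose ``crux'' is an unproved rank statement is a restatement of the conjecture, not a proof of it.

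There are also structural gaps upstream of the rank question. First, the reduction to the ``universal case'' is not as clean as claimed: the generators of $\RR(\mathcal{M}_{2\ell})$ are push-forwards of monomials in $\kappa^\Lambda$ classes from the loci $\mathcal{M}_\Lambda$, so your induction must be run simultaneously over all lattices $\Lambda$ (a double induction on Picard rank and codimension), and the localization output over $\mathcal{M}_{2\ell}$ itself dumps onto the right-hand side positive-codimension $\kappa^\Lambda$ classes supported on Noether-Lefschetz divisors --- these are not in $\NL(\mathcal{M}_{2\ell})$ by definition and must themselves be handled by the induction, which you have not set up. Second, your base case invokes $\mathsf{NL}^1 = \mathsf{A}^1$ from \cite{Ber} for $\mathcal{M}_{2\ell}$, but the same input is then needed for each $\mathcal{M}_\Lambda$, which is a further assumption. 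The strategy is the right one --- it is the authors' own --- but as written the proposal proves nothing beyond what the paper already asserts.
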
 
If true, Conjecture \ref{fff} would lead to a much simpler
picture of the additive structure of $\RR(\mathcal{M}_{2\ell})$
since  good approaches to the span of the Noether-Lefschetz
classes are available \cite{B,KM}. We further speculate that the relations
obtained by localizing
 the virtual class of $\pi$-relative Quot schemes are sufficient
to prove Conjecture \ref{fff}. We have checked this in small degree and small codimension.


\begin{thebibliography}{} 

\bibitem {AB}

M. Atiyah, R. Bott, {\it The moment map and equivariant cohomology}, Topology 23 (1984), 1 -- 28.

\bibitem {BF}

K. Behrend, B. Fantechi, {\it The intrinsic normal cone}, Invent. Math. 128 (1997), 45 -- 88. 

\bibitem {Ber} 

N. Bergeron, Z. Li, J. Millson, C. Moeglin, {\it The Noether-Lefschetz conjecture and generalizations},  \texttt {arXiv:1412.3774}.

\bibitem {B}

R. E. Borcherds, {\it The Gross-Kohnen-Zagier theorem in higher dimensions}, Duke Math.
J. 97 (1999), 219 -- 233.

\bibitem{Cott}
E. Cotterill, {\it Geometry of curves with exceptional secant planes: linear
series along the general curve}, Math. Zeit. 267 (2011), 549 -- 582.

\bibitem {DKO}

M. D\"urr, A. Kabanov, C. Okonek, {\it Poincare invariants}, Topology 46 (2007), 225 -- 294.
 
\bibitem {EGL}

G. Ellingsrud, L. G\"ottsche, M. Lehn, {\it On the cobordism class of the Hilbert scheme of a surface}, J. Algebraic Geom. 10 (2001), 81 -- 100. 

\bibitem{kvg}

G. van der Geer, T. Katsura,  {\it Note on tautological classes of moduli of $K3$ surfaces}, Mosc. Math. J. {5} (2005), 775 -- 779.

\bibitem{GP}

T. Graber, R. Pandharipande, {\it Localization of virtual classes}, Invent. 
Math. 135 (1999), 487 -- 518.

\bibitem {J} 

E. Jabotinsky, {\it Representations of functions by matrices}, Proc. Amer. Math. Soc. 4 (1953), 546 -- 553. 
  
\bibitem {Ja} 

F. Janda, {\it Tautological relations in moduli spaces of weighted pointed curves}, \texttt{arXiv:1306.6580}.

\bibitem{KM} S. Kudla, J. Millson, {\it Intersection numbers of cycles on locally symmetric
spaces and Fourier coefficients of holomorphic modular forms in 
several complex variables}, Pub. IHES 71 (1990), 121 -- 172.

\bibitem{LeB} 

P. Le Barz, {\it Sur une formule de Castelnuovo pour les espaces multisecants},  Boll. Unione Mat. Ital. Sez. B 10 (2007), 381 -- 387. 

\bibitem {L}

M. Lehn, {\it Chern classes of tautological sheaves on
Hilbert schemes of points on surfaces},  Invent. Math. 136 (1999), 157 -- 207.

\bibitem {MO}

A. Marian, D. Oprea, {\it Virtual intersections on the Quot scheme and Vafa-Intriligator formulas,} Duke Mathematical Journal 136 (2007), 81 -- 131. 

\bibitem {MOP}

A. Marian, D. Oprea, R. Pandharipande, {\it The moduli space of stable quotients}, Geometry and Topology 15 (2011), 1651 -- 1706.

\bibitem{DM} 

D. Maulik, {\it Supersingular $K3$ surfaces for large primes}, Duke Math. J. 163 (2014), 2357 -- 2425.

\bibitem{MP} 

D. Maulik, R. Pandharipande, {\it Gromov-Witten theory and Noether-Lefschetz theory}, A celebration of algebraic geometry, 469 -- 507, Clay Math. Proc., 18, Amer. Math. Soc., Providence, RI, 2013.

\bibitem {N}

H. Nakajima, {\it Heisenberg algebra and Hilbert schemes of points on projective
surfaces}, Ann. Math. 145 (1997), 379 -- 388.

\bibitem {OG}

K. O'Grady, {\it Moduli of abelian and $K3$ surfaces}, Ph.D. Thesis, Brown University, 1986. 

\bibitem{P} 

R. Pandharipande, {\it The kappa ring of the moduli space of
curves of compact type},  Acta Math. 208 (2012), 335 -- 388. 

\bibitem{PP} 

A. Pixton, R. Pandharipande, {\it Relations in the tautological ring of the moduli space of curves}, \texttt{arXiv:1301.4561.}

\bibitem {S}

D. Schultheis, {\it Virtual invariants of Quot schemes over del Pezzo surfaces}, Ph.D. Thesis, UC San Diego (2012). 

\bibitem {T}

A. S. Tikhomirov, {\it Standard bundles on a Hilbert scheme of points on
a surface}, Algebraic geometry and its applications, Yaroslavl' 1992, Aspects of Mathematics, Vol. E25, Vieweg Verlag, 1994.

\bibitem {TT}

A. S. Tikhomirov, T. L. Troshina, {\it Top Segre class of a standard vector
bundle $\mathcal E^4_D$ on the Hilbert scheme $Hilb^4(S)$ of a surface $S$}, Algebraic
geometry and its applications, Yaroslavl' 1992, Aspects of Mathematics,
Vol. E25, Vieweg Verlag, 1994.

\bibitem {Ty}

A. N. Tyurin, {\it The Spin-polynomial invariants of the smooth structures of algebraic surfaces},
Iz. An SSSR 57 (1993), 279 -- 371.

\bibitem {Wang} Z. Wang, {\it Tautological integrals on Hilbert schemes
of points on curves}, \texttt{arXiv:1506.08405}.

\end{thebibliography}
\end{document}